\DeclareMathAlphabet{\mathpzc}{OT1}{pzc}{m}{it}
\newtheorem{theorem}{Theorem}[section]
\newtheorem{lemma}[theorem]{Lemma}
\newtheorem{corollary}{Corollary}[section]
\newtheorem{problem}[theorem]{Problem}
\theoremstyle{definition}
\newtheorem{definition}[theorem]{Definition}
\newtheorem{example}[theorem]{Example}
\theoremstyle{remark}
\numberwithin{equation}{section}
\DeclareRobustCommand{\cev}[1]{%
  \mathpalette\do@cev{#1}%
}
\newcommand{\do@cev}[2]{%
  \fix@cev{#1}{+}%
  \reflectbox{$\m@th#1\vec{\reflectbox{$\fix@cev{#1}{-}\m@th#1#2\fix@cev{#1}{+}$}}$}%
  \fix@cev{#1}{-}%
}
\newcommand{\fix@cev}[2]{%
  \ifx#1\displaystyle
    \mkern#23mu
  \else
    \ifx#1\textstyle
      \mkern#23mu
    \else
      \ifx#1\scriptstyle
        \mkern#22mu
      \else
        \mkern#22mu
      \fi
    \fi
  \fi
}
 \newcommand{\virgolette}{``}
\newcommand*{\defeq}{\mathrel{\vcenter{\baselineskip0.5ex \lineskiplimit0pt
                     \hbox{\scriptsize.}\hbox{\scriptsize.}}}%
                     =}
\newcommand\asim{\mathrel{%
  \ooalign{\raise0.1ex\hbox{$\sim$}\cr\hidewidth\raise-0.8ex\hbox{\scalebox{0.9}{$\scriptstyle{x}$}}\hidewidth\cr}}}
\newcommand{\mani}{\ensuremath{\mathpzc{M}}}
\newcommand{\manir}{\ensuremath{\mathpzc{M}_{red}}}
\newcommand{\stsheaf}{\ensuremath{\mathcal{O}_{\mathpzc{M}}}}
\newcommand{\K}{\mathcal{K}}
\newcommand{\beq}{\begin{equation}}
\newcommand{\eeq}{\end{equation}}
\newcommand{\bear}{\begin{eqnarray}}
\newcommand{\eear}{\end{eqnarray}}
\begin{document}

\title{The Universal de Rham/Spencer Double Complex on a Supermanifold}

\author{Sergio L. Cacciatori}
\address{Universit\`a degli Studi dell'Insubria and INFN - Sezione di Milano}
\curraddr{Via Valleggio 11, 22100, Como, Italy and Via Celoria 16, 20133, Milano, Italy}
\email{sergio.cacciatori@uninsubria.it}

\author{Simone Noja}
\address{Universit\"at Heidelberg}
\curraddr{Im Neuenheimer Feld 205, 69120, Heidelberg, Germany}
\email{noja@mathi.uni-heidelberg.de}

\author{Riccardo Re}
\address{Universit\`a degli Studi dell'Insubria}
\curraddr{Via Valleggio 11, 22100, Como, Italy}
\email{riccardo.re@uninsubria.it}

\subjclass[2010]{14F10, 14F40, 58A50}

\keywords{D-modules, Universal de Rham Complex, Supergeometry}

\begin{abstract} The universal Spencer and de Rham complexes of sheaves over a smooth or analytical manifold are well known to play a basic role in the theory of $\mathcal{D}$-modules. In this article we consider a double complex of sheaves  generalizing both complexes for an arbitrary supermanifold, and we use it  to unify  the notions of differential and integral forms on real, complex and algebraic supermanifolds. 
The associated spectral sequences give the de Rham complex of differential forms and the complex of integral forms at page one. For real and complex 
supermanifolds both spectral sequences converge at page two to the locally constant sheaf. We use this fact to show that the cohomology of differential forms is isomorphic to the cohomology of integral forms, and they both compute the de Rham cohomology 
of the reduced manifold. Furthermore, we show that, in contrast with the case of ordinary complex manifolds, the Hodge-to-de Rham (or Fr\"olicher) spectral sequence of supermanifolds with K\"ahler reduced manifold does not converge in general at page one. 
\end{abstract}

\maketitle

\tableofcontents

\section{Introduction}

\noindent The mathematical theory of forms on supermanifolds, together with the related integration theory, is one of the most peculiar and non-trivial aspect of supergeometry \cite{Deligne} \cite{Manin} \cite{Witten}. Indeed, whereas on an ordinary manifold differential forms anticommute, so that the de Rham complex terminates at the dimension of the manifold, in supergeometry the differentials of odd functions do commute instead. This apparently trivial fact has far-reaching consequences, namely it implies that the de Rham complex of a supermanifold is not bounded from above \cite{Manin} \cite{Penkov}. This, in turn, leads to the failure of Poincaré duality, as there is no notion of a {top} differential form which yields a tensor density that can be integrated over a supermanifold. In order to cure this pathology and define a meaningful integration theory analogous to the ordinary integration of differential forms in the classical setting, the notion of \emph{integral form} has been developed and introduced in supergeometry \cite{Deligne} \cite{Manin} \cite{Penkov}. Integral forms fit into a complex which, in some sense, is dual to the de Rham complex of differential forms: whereas the de Rham complex of a supermanifold is not bounded from above, the complex of integral forms - or Spencer complex of a supermanifold - is not bounded from below. In particular, one of the most peculiar and defining supergeometric construction, that of Berezinian sheaf \cite{Ruiperez} \cite{NojaRe} - whose sections can be integrated over the supermanifold - plays the role of the top sheaf in the complex of integral forms, thus providing a substitute for the notion of canonical sheaf in supergeometry. \\
\noindent In this view, integral forms appear in supergeometry as more useful and natural mathematical objects than differential forms. However, on the other hand, the definitions of sheaves of differential forms, 
vector fields or also sheaves of linear differential operators are easily available in supergeometry by the same constructions as in classical geometry: consider for example the construction of these objects due to Grothendieck, which applies to an extremely
general setting. 
One purpose of this paper is to give a new construction of integral forms which is both coordinate-free and built upon the more standard notions of differential forms and operators, with the future aim of studying possible generalizations to other 
classes of ``superforms". \\
We stress that the \emph{syntax} of such objects is known in the physics literature: this means that there exists a formalism of integral forms expressed in terms of coordinates, together with associated calculus and transformation rules 
\cite{Witten} \cite{WittenSuper}.   
This formalism has been further expanded to an extended formalism of ``superforms", generalization of integral forms, which have been developed and applied, for example, in the recent \cite{CCG} \cite{CCGN} \cite{CGNinf} \cite{CGN}  \cite{CA1} \cite{CGP}.
On the \emph{semantic} side, of course a coordinate-free construction of integral forms exists in the supergeometry literature, see \cite{Manin}, but in a way which is unrelated to differential forms, so that there is no obvious relations between these two concepts. \\
\noindent
In the present paper, starting from first principles, we unify the notions of differential and integral forms and their related complexes on real, complex and also algebraic supermanifolds. In particular, given the natural sheaves of differential operators 
$\mathcal{D}_\mani$ and differential forms ${\Omega}^{\bullet}_{\mani, odd}$ on a certain supermanifold $\mani$, we start from the so-called \emph{universal de Rham complex} $\Omega^{\bullet}_{\mani, odd} \otimes_{\stsheaf} \mathcal{D_\mani}$ and 
\emph{universal Spencer complex} $\mathcal{D}_{\mani} \otimes_{\stsheaf} (\Omega^{\bullet}_{\mani, odd})^\ast$ and we show that they can be unified into a single double complex of sheaves supported on the triple tensor product 
$\Omega^{\bullet}_{\mani, odd} \otimes_{\stsheaf} \mathcal{D}_\mani \otimes_{\stsheaf} (\Omega^{\bullet}_{\mani, odd})^{\ast}$, which we call the de Rham/Spencer double complex. This displays a truly non-commutative behavior, rather than just a super-commutative one, due to the presence of the sheaf $\mathcal{D}_\mani$ in 
the pivotal position. The two spectral sequences associated to this double complex yield, at page one, the complex of differential forms and the complex of integral forms on $\mani$. Furthermore, in the case of real or complex supermanifolds, 
both spectral sequences converge at page two to the sheaf of locally constant functions over $\mathbb{R}$ or $\mathbb{C}$, depending on the supermanifold being real or complex. This is a consequence of the \emph{Poincar\'e lemmas} for differential and 
integral forms. More precisely, we prove the following Theorem, which gathers Theorems \ref{UniversalDR} and \ref{USC} from Sections 3 and 4 respectively and Theorems \ref{doublecomplex}, \ref{diffform} and  \ref{intform} from Section 5 of the paper.
\begin{theorem}[Main Theorem] Let $(\mani, \mathcal{O}_\mani)$ be a real, complex of algebraic supermanifold. Then 
\begin{enumerate}[leftmargin=*]
\item the homology of the universal de Rham complex $(\Omega^{\bullet}_{\mani, odd} \otimes_{\stsheaf} \mathcal{D_\mani}, D)$ is naturally isomorphic to the Berezinian sheaf $\mathcal{B}er (\mani);$
\item the homology of the universal Spencer complex  $(\mathcal{D}_{\mani} \otimes_{\stsheaf} (\Omega^{\bullet}_{\mani, odd})^\ast, \delta )$ is naturally isomorphic to the structure sheaf $\mathcal{O}_\mani$.
\end{enumerate}
The universal de Rham complex and the universal Spencer complex can be unified into a double complex $(_{\mathpzc{D}} \mathcal{V}_{\mani}^{\bullet \bullet}, \hat D, \hat \delta )$ with associated spectral sequences $(E_r^{\Omega}, d^\Omega_r)$ and $(E_r^{\Sigma}, d^\Sigma_r)$. Then 
\begin{enumerate}[leftmargin=*]
\item $(E_1^\Omega, d_1^\Omega) $ is isomorphic to the complex of differential forms on $\mani$ and $(E_1^\Sigma, \delta^\Sigma_1)$ is isomorphic to the complex of integral forms on $\mani$;
\item Provided that $\mani$ is a real or a complex supermanifold, both of the spectral sequences converge at page 2 to the constant sheaf valued in the real or in the complex numbers.
\end{enumerate}
\end{theorem}

\noindent Whilst the proof of the Poincar\'e lemma for differential forms in a supergeometric context is well-known and it consists of a straightforward generalization of the ordinary one, the Poincar\'e lemma for integral forms, instead, is a hallmark of 
supergeometry and we will prove it in detail in Theorem \ref{PLInt}. 
Finally, we enhance the above de Rham/Spencer double complex of sheaves with a triple complex structure, by taking its \v{C}ech cochains, see Definition \ref{triplecompl}. In this way one obtains two double complexes: one is the \emph{\v{C}ech-de Rham double complex} of differential forms on $\mani$ 
and the other is the \emph{\v{C}ech-Spencer double complex} of integral forms on $\mani$. We show that the related spectral sequences both converge to the \emph{de Rham cohomology} of the reduced manifold, showing that the cohomology of differential forms is isomorphic to the cohomology of integral forms, and, in turn, that the complexes of differential and integral forms are quasi-isomorphic. More precisely, we prove from first principles Theorem \ref{Cechde}, which we phrase here as follows.
\begin{theorem}[Quasi-Isomorphism] Let $\mani$ be a real supermanifold. The cohomology of differential forms $H^\bullet_{\mathpzc{dR}} (\mani)$ and the cohomology of integral forms 
$H^\bullet_{\mathpzc{Sp}} (\mani) $ are isomorphic, \emph{i.e.}\
\bear
H^\bullet_{\mathpzc{dR}} (\mani) \cong \check{H}^\bullet (\mani, \mathbb{R}_\mani) \cong H^\bullet_{\mathpzc{Sp}} (\mani), \nonumber
\eear 
In particular the complex of differential forms and of integral forms on $\mani$ are quasi-isomorphic.
\end{theorem}
\noindent Nonetheless, we remark that in the case of a complex supermanifold with K\"ahler reduced manifold, something intriguing happens. Indeed, the Hodge-to-de Rham (or Fr\"olicher) spectral sequence, which still computes the de Rham cohomology of the reduced manifold, does {not} converge 
at page one, as it does in the ordinary setting. 
Instead, there are many more non-trivial maps, as shown in example \ref{example}, thus hinting at new promising developments in the geometry of complex supermanifolds.

\

\noindent {\bf Acknowledgments.} The authors wish to thank Ivan Penkov for fruitful discussions and advice. 

\section{Setting the Stage: Main Definitions}

\noindent In the following we will work over a real, complex analytic or algebraic supermanifold $\mani$ unless otherwise stated. See the classical textbook \cite{Manin} for a thorough introduction, or the recent \cite{P2} by the authors for a short 
compendium to the topic. \\
We let $\mani$ be a supermanifold of dimension $p|q$ and we denote its reduced space, which is an ordinary (real, complex or algebraic) manifold of dimension $p$, by $\manir$. In particular, we will deal with the sheaf of $1$-forms 
$\Omega^{1}_{\mani, odd}$ on $\mani$. This is a locally-free sheaf on $\mani$ of rank $q|p$. Indeed, if we let $U$ be an open set in the topological space underlying $\manir$ and we set $x_a  =  z_i | \theta_\alpha $ for $i=1, \ldots, p$ and 
$\alpha = 1, \ldots,q$ to be a system of local coordinates over $U$ for the supermanifold $\mani$, we have that 
\bear
\Omega^{1}_{\mani, odd} (U) = \{ d\theta_1, \ldots, d\theta_q | dz_1, \ldots, dz_p \} \cdot \mathcal{O}_\mani (U) ,
\eear
where $\mathcal{O}_\mani$ is the structure sheaf of $\mani$. We stress that the $d\theta$'s are {even} and the $dz$'s are {odd}, as we take the differential $d: \stsheaf \rightarrow \Omega^1_{\mani, odd}$ to be an {odd} morphism. 
Also, note 
that we have written $\Omega^1_{\mani, odd}$ as a locally-free sheaf of {right} $\mathcal{O}_\mani$-modules.\\
Likewise, we denote the dual of $\Omega^{1}_{\mani, odd} $ with $(\Omega_{\mani, odd}^1)^\ast$. This can be canonically identified with the sheaf $\Pi \mathcal{T}_\mani$, where $\mathcal{T}_\mani$ is the tangent sheaf of $\mani$ and $\Pi$ - 
the so-called parity-changing functor - is there to remind that the parity of the sheaf is reversed, so that the rank of $\Pi \mathcal{T}_\mani$ is actually $q|p$. We will call a section of $\Pi \mathcal{T}_\mani = (\Omega^1_{\mani, odd})^\ast$ a $\Pi$-vector 
field or vector field for short. Locally, 
$(\Omega^1_{\mani, odd})^\ast$ is generated by expressions of the kind
\bear
(\Omega^{1}_{\mani, odd})^\ast (U) = \mathcal{O}_{\mani} (U) \cdot \{ \pi \partial_{\theta_1}, \ldots, \pi \partial_{\theta_q} | \pi \partial_{z_1}, \ldots, \pi \partial_{z_p} \},
\eear
where the $\pi \partial_{\theta}$'s are even and the $\pi \partial_{z}$'s are odd. Notice that $(\Omega^1_{\mani, odd})^\ast$ has been written with the structure of locally-free sheaf of {left} $\mathcal{O}_\mani$-modules. 

\

\noindent Applying the supersymmetric power functor $S^\bullet$ to the sheaf $\Omega^1_{\mani, odd}$ and $(\Omega^1_{\mani, odd})^\ast$ one gets the usual notion of (differentially graded) algebra of forms and polyfields over a supermanifold. 
In particular, we call a section 
of the sheaf $\Omega^{k}_{\mani, odd} \defeq S^k \Omega^1_{\mani, odd}$ a differential $k$-superform, or a $k$-form for short. The differential $d: \stsheaf \rightarrow \Omega^1_{\mani, odd}$ lifts to the exterior derivative 
$d : \Omega^{k}_{\mani, odd} \rightarrow \Omega^{k+1}_{\mani, odd}$, which is an odd (nilpotent) superderivation of $\Omega^\bullet_{\mani, odd}$, obeying the Leibniz rule in the form
\bear \label{Leibform}
d (\omega \eta) = d \omega \,\eta + (-1)^{|\omega|} \omega \, d\eta,
\eear
for $\omega \in \Omega^{k}_{\mani, odd}$ and $\eta \in \Omega^{\bullet}_{\mani, odd}$ and where $|\omega|$ is the parity of $\omega$ (which equals the degree of $\omega$ mod 
$\mathbb{Z}_2$: notice that $(-1)^{|\omega|} = (-1)^{\deg (\omega)}$). Here and in what follows we leave the product in the superalgebra of forms understood for the sake of notation. The pair $(\Omega^{\bullet}_{\mani, odd}, d)$ defines the \emph{de Rham complex} of $\mani$. Once again, we will consider any $\Omega^k_{\mani, odd}$ with the structure of 
{right} $\mathcal{O}_\mani$-module. \\
Likewise, we call a section of $ (\Omega^k_{\mani, odd})^\ast \defeq S^k \Pi \mathcal{T}_\mani$ a $\Pi$-vector $k$-field, or a polyvector field for short. Once again, any $(\Omega^k_{\mani, odd})^\ast$ has the structure of {left} 
$\stsheaf$-module. \\
Notice that there exists a pairing 
\bear \xymatrix@R=1.5pt{
\langle \; , \; \rangle : \Omega^{\bullet}_{\mani, odd} \otimes_{\mathcal{O}_\mani} (\Omega^\bullet_{\mani, odd})^\ast  \ar[r] & (\Omega^\bullet_{\mani, odd})^\ast  \\
\omega \otimes \tau \ar@{|->}[r] & \langle \omega, \tau \rangle
}\eear 
which is defined via the {contractions} in a way such that
\bear
\langle dx_a , \pi \partial_{x_\beta} \rangle = (-1)^{(|x_a| + 1)(|x_b| + 1)} \delta_{ab}. 
\eear
In particular, it can be observed that $1$-forms $\omega \in \Omega^1_{\mani, odd}$ act as {superderivations} of $(\Omega^\bullet_{\mani, odd})^\ast$, \emph{i.e.}\ they satisfy the Leibniz rule in the above form. Moreover, explicitly, for 
$ \pi X \in \Pi \mathcal{T}_\mani$ and $\omega = df \in \Omega^1_{\mani, odd}$ one easily finds that
\begin{align}
\langle df , \pi X \rangle = (-1)^{(|X| + 1)(|f| + 1)} X (f).
\end{align} 
where $f \in \mathcal{O}_\mani$ and $X \in \mathcal{T}_\mani$.

\

\noindent Also, we introduce the sheaf $\mathcal{D}_\mani$ of (linear) \emph{differential operators} on $\mani$, which can be abstractly defined as the subalgebra of $\mathcal{E}nd_k (\mathcal{O}_\mani)$ generated by 
$\mathcal{O}_\mani$ and $\mathcal{T}_\mani$. This means that over an open set $U$ one has that the set $\{ x_a , \partial_{x_b} \}$, where $x_a \in \mathcal{O}_\mani \lfloor_U$ and $\partial_{x_a} \in \mathcal{T}_\mani \lfloor_U$ for $a$ ranging over 
both even and odd coordinates, gives a local trivialization of $\mathcal{D}_\mani$ over $U$ and where the following defining relations are satisfied
\bear \label{commweyl}
[x_a, x_b] = 0, \qquad [\partial_{x_a} , \partial_{x_b} ] = 0, \qquad [\partial_{x_a}, x_b] = \delta_{ab},
\eear
where $[\, , \, ]$ is the supercommutator. It follows from equations \eqref{commweyl} that $\mathcal{D}_\mani \lfloor_U$ is isomorphic to the \emph{Weyl superalgebra} of $\mathbb{K}^{p|q}$: the sheaf $\mathcal{D}_\mani$ is thus {noncommutative} rather than just 
supercommutative, something which will play a major role in what follows. It is also worth stressing that $\mathcal{D}_\mani$ admits a {filtration} by the degree of the differential operators such that 
$\mathcal{D}_{\mani}^{(\leq i )} \subseteq \mathcal{D}_{\mani}^{(\leq i+1)}$ for any $i \geq 0 $ and $\mathcal{D}_\mani^{(\leq i )} \cdot \mathcal{D}_\mani^{(\leq j)} \subseteq \mathcal{D}_{\mani}^{(\leq i+j)}$. It is not hard to see that, defining 
$\mbox{gr}^{k} (\mathcal{D}_\mani ) \defeq \mathcal{D}^{(\leq k)}/ \mathcal{D}_{\mani}^{(\leq k-1)}$, one has $\mbox{gr}^{k}(\mathcal{D}_\mani) \cong S^k \mathcal{T}_\mani,$ so that in turn one has 
$\mbox{gr}^{\bullet} \mathcal{D}_\mani \cong S^\bullet \mathcal{T}_\mani$, which can be looked at as a sort of supercommutative approximation of $\mathcal{D}_\mani.$ Finally, notice that $\mathcal{D}_\mani$ is endowed with the structure of 
{$\mathcal{O}_\mani$-bimodule}, \emph{i.e.}\ $\mathcal{D}_\mani$ is a left and right $\mathcal{O}_\mani$-module with the operations given respectively by multiplication to the left and to the right by elements $f \in \stsheaf.$ 

\

\noindent A peculiar construction to supergeometry is the one of \emph{Berezinian sheaf} of a supermanifold. This substitutes the notion of canonical sheaf on an ordinary manifold, which makes no sense on a supermanifold since the 
de Rham complex is not bounded from above. Notice that this sheaf does {not} belong to the de Rham complex, \emph{i.e.}\ it is not made out of ordinary differential forms in $\Omega^1_{\mani, odd}.$ On the other hand, just like the canonical sheaf 
in a purely commutative setting, the Berezinian sheaf can be defined via the Koszul complex, or better its supersymmetric generalization - see \cite{Manin} \cite{Severa} and the recent dedicated paper \cite{NojaRe}; for a different very nice construction in the smooth category see \cite{Ruiperez}. More precisely, given a locally-free 
sheaf $\mathcal{E}$ of rank $p|q$ over a supermanifold 
$\mani$, one defines the Berezinian sheaf $\mathcal{B}er (\mathcal{E})$ of $\mathcal{E}$ to be the locally-free sheaf of rank $\delta_{0, (p+q)\mbox{\scriptsize{mod}} 2} | \delta_{1, (p+q)\mbox{\scriptsize{mod}}2} $ given by 
$\mathcal{B}er (\mathcal{E}) \defeq \mathcal{E}xt^{p}_{S^\bullet \mathcal{E}^\ast} (\stsheaf, S^\bullet \mathcal{E}^\ast)$. In particular, one defines the Berezinian sheaf of the supermanifold $\mani$ to be 
$\mathcal{B}er (\mani) \defeq \mathcal{B}er(\Omega^1_{\mani, odd})^\ast $, \emph{i.e.}\ one has
\bear \label{BerezinianNoja}
\mathcal{B}er (\mani) \defeq \mathcal{H}om_{\mathcal{O}_\mani} (\mathcal{E}xt^q_{S^\bullet \Pi \mathcal{T}_\mani} (\mathcal{O}_\mani, S^\bullet \Pi \mathcal{T}_\mani), \mathcal{O}_\mani) \cong_{loc} \Pi^{q+p}\mathcal{O}_{\mani}. 
\eear
In the following, we will use extensively that if $x=z_1, \ldots, z_p | \theta_1, \ldots, \theta_q$ is a system of local coordinates for $\mani$, then the Berezinian sheaf is locally-generated by the class 
\bear
\varphi(x)=[dz_1 \ldots dz_p \otimes \partial_{\theta_1} \ldots \partial_{\theta_q}]
\eear in the above homology \eqref{BerezinianNoja}, see \cite{NojaRe}. Further, we stress that the Berezinian sheaf will be looked at as a sheaf or 
{right} $\mathcal{O}_\mani$-modules.

\

\noindent The Berezinian sheaf of $\mani$ enters the construction of the so-called \emph{integral forms}, see for example \cite{Manin} or \cite{Penkov}. Given a supermanifold $\mani$, these are defined as sections of the sheaf 
$\mathcal{H}om_{\stsheaf} (\Omega^{\bullet}_{\mani, odd}, \mathcal{B}er (\mani))$, or analogously $\mathcal{B}er (\mani) \otimes_{\stsheaf} S^\bullet \Pi \mathcal{T}_\mani.$ Integral forms can be endowed with the structure of a complex by 
providing a differential $\delta : \mathcal{B}er (\mani) \otimes_{\stsheaf} S^k \Pi \mathcal{T}_\mani \rightarrow \mathcal{B}er (\mani) \otimes_{\stsheaf} S^{k-1} \Pi \mathcal{T}_\mani$, whose definition is quite tricky: well-definedness and 
invariance are indeed far from obvious (see \cite{Manin}, where the differential on integral forms is induced using the notion of \emph{right connection} on $\mathcal{B}er (\mani)$). For this reason the construction of the differential which makes integral forms into an actual 
complex will be discussed further later on in the paper. Here we limit ourselves to say that, 
locally, moving functions to the left of the tensor product $\mathcal{B}er (\mani) \otimes_{\stsheaf} S^k \Pi \mathcal{T}_\mani$, the differential gets written as 
\bear \label{deltaint}
\delta (\varphi(x) f \otimes \pi \partial^I ) = - \sum_a (-1)^{|x_a||f| + |\pi \partial^I |} \varphi (x) (\partial_a f) \otimes \partial_{\pi \partial_a } (\pi \partial^I)  
\eear
where $\varphi (x) $ is the local generating section of $\mathcal{B}er (\mani)$ introduced above, $\pi \partial^I$ is a homogeneous section of $S^k \Pi \mathcal{T}_\mani$, $f$ is a function and where the derivative with respect to the coordinate field $\pi \partial_a$ is 
nothing but 
the contraction of the polyfield with the form dual to $\pi \partial_a$, that is $\langle dx_a , \pi \partial^I \rangle = \partial_{\pi \partial_a} ( \pi \partial^I).$ We will see that this definition is related with the structure of right $\mathcal{D}_\mani$-module of 
$\mathcal{B}er (\mani)$ - first discovered by Penkov in \cite{Penkov} - and, in turn, with its Lie derivative. Finally, we stress that given a $p|q$ dimensional supermanifold $\mani$, it is useful to shift the degree of the complex of integral forms, posing 
$ \Sigma_{\mani}^{p - \bullet} \defeq \mathcal{B}er (\mani) \otimes_{\stsheaf} S^{p-\bullet} \Pi \mathcal{T}_\mani $ and consider the complex $(\Sigma_{\mani}^{p - \bullet }, \delta )$, so that an integral form of degree $p$, \emph{i.e.}\ a section of the Berezinian sheaf, 
can be integrated on $\mani$, in the same fashion as an ordinary $p$-form can be integrated on an ordinary $p$-dimensional manifold. More in general, with this convention, it can be seen that an integral form of degree $p-k$ on $\mani$ can be integrated on a sub-supermanifold of codimension $k|0$ in $\mani$, see for example \cite{Witten}.

\section{Universal de Rham Complex and its Homology}

\noindent We now introduce one of the main characters of our study, \emph{cfr.}\ for example \cite{Sch95}.
\begin{definition}[Universal de Rham Sheaf of $\mani$] Let $\mani$ be a supermanifold. We call the sheaf $ \Omega^\bullet_{\mani, odd} \otimes_{\stsheaf} \mathcal{D}_\mani$ the universal de Rham sheaf of $\mani$. 
\end{definition}
\noindent Notice that the universal de Rham sheaf is $\mathbb{Z}$-graded by the gradation of $\Omega^\bullet_{\mani, odd} $ and also $\mathbb{Z}_2$-graded as both of its components are. Moreover it is filtered by the filtration by degree on 
$\mathcal{D}_\mani$ introduced in the previous section.   
\noindent Clearly, the universal de Rham sheaf $\Omega^{\bullet}_{\mani, odd} \otimes_{\stsheaf} \mathcal{D}_\mani$ is naturally a \emph{left} $\Omega^\bullet_{\mani, odd}$-module and a {right} $\mathcal{D}_\mani$-module. In particular it is 
a {left} $\mathcal{O}_\mani$-module by restriction on the structure of $\Omega^{\bullet}_{\mani, odd}$-module. On the other hand $\Omega^\bullet_{\mani, odd} \otimes \mathcal{D}_\mani$ is also a {right} $\mathcal{O}_\mani$-module with 
the structure induced by the one of right $\mathcal{D}_\mani$-module: this structure, though, does {not} coincide with the one of left $\mathcal{O}_\mani$-module.

\

\noindent We are interested in finding a \emph{natural} differential as to make the universal de Rham sheaf into a proper complex of sheaves. We first need the following 
\begin{definition}[$\mathcal{O}_\mani$-Definition] \label{defOM} Let $\mathcal{L}$ and $\mathcal{R}$ be a left and a right $\stsheaf$-module respectively. Let $\phi : \mathcal{R} \otimes_{\mathbb{C}} \mathcal{L} \rightarrow \mathcal{H}$ be a morphism of sheaves of 
$\mathbb{C}$-modules into a sheaf $\mathcal{H}$. We say that $\phi$ is $\mathcal{O}_\mani$-defined if it descends to a $\mathbb{C}$-linear operator $\hat \phi : \mathcal{R} \otimes_{\mathcal{O}_\mani} \mathcal{L} \rightarrow \mathcal{H}$, \emph{i.e.}\ if 
the identity 
\bear
\phi (l f \otimes r) = \phi (l \otimes f r)
\eear 
holds true for any $l \in \mathcal{L}, r \in \mathcal{R}$ and $f \in \stsheaf.$
\end{definition}
\noindent Given this definition, we now introduce the following operator
\begin{definition}[The Operator $D$] Let $\omega \otimes F \in \Omega^\bullet_{\mani, odd} \otimes_{\stsheaf} \mathcal{D}_\mani$ such that $\omega $ and $F$ are homogeneous. We let $D$ be the operator 
\bear
\xymatrix@R=1.5pt{
D : \Omega^\bullet_{\mani, odd} \otimes_{\mathbb{C}} \mathcal{D}_\mani \ar[r] & \Omega^\bullet_{\mani, odd} \otimes_{\stsheaf} \mathcal{D}_\mani \\
\omega \otimes F \ar@{|->}[r] &  D \left (\omega \otimes F \right ) \defeq d \omega \otimes F + \sum_{a} (-1)^{|\omega| |x_a|} dx_a  \omega \otimes \partial_{x_a}  \cdot F,
 }
\eear
where $x_a = z_1, \ldots, z_p | \theta_1, \ldots, \theta_q$, so that the index $a$ runs over all of the even and odd coordinates. 
\end{definition}
\noindent Notice that the operator $D$ is of degree $+1$ with respect to the $\mathbb{Z}$-degree of $\Omega^{\bullet}_{\mani, odd}$, \emph{i.e.}\ it raises the form number by one. The properties of $D$ are characterized in the following Lemma. 
\begin{lemma} \label{propD} The operator $D$ has the following properties:
\begin{enumerate}
\item it is globally well-defined \emph{i.e.}\ it is invariant under generic change of coordinates; 
\item it is $\mathcal{O}_\mani$-defined in the sense of definition \ref{defOM}, \emph{i.e.}\ it induces an operator 
$D : \Omega^\bullet_{\mani, odd} \otimes_{\stsheaf} \mathcal{D}_\mani \otimes_{\stsheaf} (\Omega^\bullet_{\mani})^{\ast} \rightarrow \Omega^{\bullet}_{\mani, odd} \otimes_{\stsheaf} \mathcal{D}_\mani \otimes_{\stsheaf} (\Omega^\bullet_{\mani})^{\ast};$
\item it is nilpotent, \emph{i.e.}\ $D^2 = 0.$
\end{enumerate}
\end{lemma}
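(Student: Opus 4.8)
The plan is to reduce all three items to two local facts together with a convenient splitting of $D$. I write $D = d' + \nabla$, where $d'(\omega\otimes F) = d\omega\otimes F$ and $\nabla(\omega\otimes F) = \sum_a (-1)^{|\omega||x_a|}\,dx_a\,\omega\otimes \partial_{x_a}\cdot F$. The two identities I will use repeatedly are the coordinate expansion of the differential, $df = \sum_a dx_a\,(\partial_{x_a}f)$ for $f\in\stsheaf$ (forced by the Leibniz rule for $d$ and the normalization of the pairing), and the defining relation of $\mathcal{D}_\mani$ written as $\partial_{x_a}\cdot f = (\partial_{x_a}f) + (-1)^{|x_a||f|} f\cdot\partial_{x_a}$, i.e.\ $[\partial_{x_a},f] = \partial_{x_a}(f)$. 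It is also worth recording that $\nabla$ is just left multiplication, with Koszul signs, by the element $\mathbb{1} = \sum_a dx_a\otimes\partial_{x_a}$, the operator $\partial_{x_a}$ crossing $\omega$ and thereby producing the sign $(-1)^{|x_a||\omega|}$. I would verify $(2)$ and $(3)$ in a fixed chart and deduce $(1)$ afterwards.

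\textbf{Item (2).}
For $\mathcal{O}_\mani$-definedness I would expand both $D(\omega f\otimes F)$ and $D(\omega\otimes fF)$ inside the target $\Omega^\bullet_{\mani,odd}\otimes_{\stsheaf}\mathcal{D}_\mani$ and match them termwise. On the left, the Leibniz rule gives $d(\omega f) = d\omega\,f + (-1)^{|\omega|}\omega\,df$; reordering $\omega\,dx_a = (-1)^{|\omega|(|x_a|+1)}dx_a\,\omega$ and then moving functions across $\otimes_{\stsheaf}$ (which is sign-free) turns the $df$-term into exactly $\sum_a(-1)^{|\omega||x_a|}dx_a\,\omega\otimes(\partial_{x_a}f)\,F$. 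On the right, that same summand is produced by the commutator piece of $\partial_{x_a}\cdot f = (\partial_{x_a}f)+(-1)^{|x_a||f|}f\,\partial_{x_a}$, while the remaining summand matches because $(-1)^{(|\omega|+|f|)|x_a|} = (-1)^{|\omega||x_a|}(-1)^{|x_a||f|}$. Hence the two expressions agree. Finally, since $F$ enters $D$ only through the left multiplications $F\mapsto\partial_{x_a}\cdot F$ and $F\mapsto F$, the operator $D$ is manifestly right $\mathcal{D}_\mani$-linear, in particular right $\stsheaf$-linear; this is precisely what lets us tensor $(\Omega^\bullet_\mani)^\ast$ on the right over $\stsheaf$ and obtain the induced operator on the triple tensor product.

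\textbf{Item (3).}
For nilpotency I would expand $D^2 = d'^2 + (d'\nabla + \nabla d') + \nabla^2$ and kill each piece. Here $d'^2 = 0$ is immediate from $d^2 = 0$ on $\Omega^\bullet_{\mani,odd}$. For the cross term, $d(dx_a\,\omega) = (-1)^{|x_a|+1}dx_a\,d\omega$ since $d^2x_a = 0$, so $d'\nabla$ contributes the sign $(-1)^{|\omega||x_a|+|x_a|+1}$ while $\nabla d'$ contributes $(-1)^{(|\omega|+1)|x_a|}$ to the identical term $dx_a\,d\omega\otimes\partial_{x_a}F$; these are negatives of one another and cancel summand by summand. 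For $\nabla^2$ I would reorder every term to the common shape $dx_b\,dx_a\otimes\partial_{x_b}\partial_{x_a}F$ using $dx_b\,dx_a = (-1)^{(|x_b|+1)(|x_a|+1)}dx_a\,dx_b$ and $\partial_{x_a}\partial_{x_b} = (-1)^{|x_a||x_b|}\partial_{x_b}\partial_{x_a}$: a short sign computation shows the $(a,b)$ and $(b,a)$ coefficients differ exactly by a sign, so the off-diagonal terms cancel in pairs. The diagonal terms $a=b$ vanish for a parity reason that is the crux: if $x_a=z_i$ is even then $dx_a=dz_i$ is odd and $(dz_i)^2 = 0$ in $S^\bullet\Omega^1_{\mani,odd}$, whereas if $x_a=\theta_\alpha$ is odd then $\partial_{\theta_\alpha}$ is odd and $\partial_{\theta_\alpha}^2 = 0$ in $\mathcal{D}_\mani$ (from $[\partial_{\theta_\alpha},\partial_{\theta_\alpha}]=0$). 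Thus $\nabla^2 = 0$ and so $D^2 = 0$.

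\textbf{Item (1) and the main obstacle.}
For global well-definedness I would check invariance of the two building blocks under a change of coordinates from $x$ to $y$. The summand $d'$ is intrinsic, since $d$ and $\otimes$ are. For $\nabla$ it suffices to show that the canonical element $\mathbb{1}=\sum_a dx_a\otimes\partial_{x_a}$ is coordinate-free: substituting $dx_a = \sum_b dy_b\,(\partial_{y_b}x_a)$ and $\partial_{x_a} = \sum_c (\partial_{x_a}y_c)\,\partial_{y_c}$, transporting the Jacobian functions across $\otimes_{\stsheaf}$, and applying the super chain rule $\sum_a (\partial_{y_b}x_a)(\partial_{x_a}y_c) = \partial_{y_b}(y_c) = \delta_{bc}$ collapses the double sum to $\sum_b dy_b\otimes\partial_{y_b}$. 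I expect the main obstacle to be exactly the sign bookkeeping: keeping the Koszul signs consistent while commuting forms, differential operators and Jacobian functions past one another, and in particular checking that the super chain rule closes up with the correct signs once the functions have been moved across $\otimes_{\stsheaf}$, where reorderings are not sign-free. The vanishing of the diagonal terms of $\nabla^2$, split according to the parity of each coordinate, is the other genuinely delicate point, and it is where the supersymmetric and noncommutative natures of the two tensor factors are really used.
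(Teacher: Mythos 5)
Your proposal is correct and follows essentially the same route as the paper: the splitting $D = d' + \nabla$ is exactly the paper's $D_1 + D_2$, item (2) is the same termwise matching via $df=\sum_a dx_a(\partial_{x_a}f)$ and $[\partial_{x_a},f]=\partial_{x_a}(f)$, and the cross-term cancellation in item (3) is identical. The only differences are expository: you spell out $\nabla^2=0$ by hand (diagonal and off-diagonal terms) where the paper simply notes that $\sum_a dx_a\otimes\partial_{x_a}$ is an odd element of a supercommutative algebra, and you supply the chain-rule invariance of $\sum_a dx_a\otimes\partial_{x_a}$ that the paper dismisses as obvious.
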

\begin{proof}
We prove separately the three claims of the Lemma.
\begin{enumerate}
\item Obvious, since each of the two summands is invariant by itself.     
\item We prove that for any $f \in \mathcal{O}_\mani,\,  \omega \in \Omega^\bullet_{\mani, odd}, \, F \in \mathcal{D}_\mani$ we have $D (\omega f \otimes_{\mathbb{C}} F ) = D (\omega \otimes_{\mathbb{C}} f F).$
Indeed, posing $df = \sum_a dx_a \partial_{x_a} f$, on the one hand one computes
\begin{align} \label{Od1}
D (\omega f \otimes F ) & = (d\omega) f \otimes F + (-1)^{|\omega|} \omega (df) \otimes F + \sum_{a} (-1)^{|x_a|(|\omega| + |f|)}dx_a \omega f \otimes \partial_{x_a} F  \nonumber \\
 & = (d\omega) f \otimes F + (-1)^{|\omega|} \omega \sum_a dx_{\alpha} \partial_{x_a} f \otimes F + \sum_{a} (-1)^{|x_a|(|\omega| + |f|)}dx_a \omega f \otimes \partial_{x_a} F. 
\end{align}
On the other hand, one has
\begin{align} \label{Od2}
D (\omega \otimes f F) & = d\omega \otimes f F +  \sum_{a} (-1)^{|x_a| |\omega| } dx_a \omega \otimes  \left ( (\partial_{x_a } f) F + (-1)^{|x_a| |f| } f (\partial_{x_a} F) \right ) \nonumber \\
& = (d\omega) f \otimes F + (-1)^{|\omega|} \omega \sum_a dx_{a} \partial_{x_a} f \otimes F + \sum_{a} (-1)^{|x_a|(|\omega| + |f|)}dx_a \omega f \otimes \partial_{x_a} F,
\end{align}
so that \eqref{Od1} is matched by \eqref{Od2}.
\item We prove that $D^2 = 0$. Writing $D = D_1 + D_2$, with $D_1 \defeq d \otimes 1 $ and $D_2 \defeq \sum_a dx_a \otimes \partial_{x_a}$ one has that 
$D^2 = D^2_1 + (D_1 D_2 + D_2 D_1)+ D^2_2.$ Clearly, $D_1^2 = 0 $ and $D_2^2 = 0 $ as well, for it is an {odd} element in the supercommutative algebra $\mathbb{C}[dx_a] \otimes_{\mathbb{C}} \mathbb{C}[\partial_a]$. It remains to prove that 
$[ D_1, D_2 ] \defeq D_1 D_2 + D_2 D_1=0$. \\
We have
\begin{align}
D_2 D_1 (\omega \otimes F) = \sum_{a} (-1)^{|x_a| ( |\omega| + 1)  } dx_{a} d\omega \otimes \partial_{x_a} F. 
\end{align}
One the other hand, one finds
\begin{align}
D_1 D_2 (\omega \otimes F) 
& = \sum_a (-1)^{|x_a|(|\omega| + 1) +1 } dx_a d\omega \otimes \partial_{x_a} F 
\end{align}
which cancels exactly the previous expression for $D_2 D_1.$
\end{enumerate}
\end{proof}
\noindent The above Lemma justifies the following definition. 
\begin{definition}[Universal de Rham Complex of $\mani$] Let $\mani$ be a supermanifold. We call the pair $(\Omega_{\mani, odd}^\bullet \otimes_{\stsheaf} \mathcal{D}_\mani, D)$ the universal de Rham complex of $\mani.$
\end{definition}
\noindent We now compute the homology of this complex.
\begin{theorem}[Homology of the Universal de Rham Complex]\label{UniversalDR} Let $\mani$ be a supermanifold and let $(\Omega^{\bullet}_{\mani, odd} \otimes_{\stsheaf} \mathcal{D}_\mani, D)$ be the universal de Rham complex of $\mani$. There 
exists a canonical isomorphism of sheaves
\bear
H_\bullet (\Omega^\bullet_{\mani, odd} \otimes_{\stsheaf} \mathcal{D}_\mani, D ) \cong \mathcal{B}er (\mani),
\eear
where $\mathcal{B}er (\mani)$ is the Berezinian sheaf of $\mani.$
\end{theorem}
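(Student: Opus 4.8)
The plan is to reduce to a local computation and to extract the homology from the principal-symbol part of $D$. Since $D$ is globally well-defined by Lemma~\ref{propD}(1), the homology sheaf $H_\bullet(\Omega^\bullet_{\mani,odd}\otimes_{\stsheaf}\mathcal{D}_\mani, D)$ is defined independently of the coordinates, so it suffices to compute it on a chart $U$ with coordinates $z_i|\theta_\alpha$ and then to check that the resulting local generator transforms as the Berezinian. Writing $D=D_1+D_2$ with $D_1=d\otimes 1$ and $D_2=\sum_a dx_a\otimes\partial_{x_a}$ as in the proof of Lemma~\ref{propD}, the key observation is that $D_1$ preserves the order of differential operators while $D_2$ raises it by one. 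I would therefore filter the complex by
\[
F_m\defeq\bigoplus_{j}\Omega^j_{\mani,odd}\otimes_{\stsheaf}\mathcal{D}_\mani^{(\le m+j)},
\]
which is an exhaustive, $D$-stable filtration, bounded below in each form-degree; on its associated graded the term $D_1$ drops out and the induced differential is exactly the symbol $\kappa\defeq\sum_a dx_a\otimes\bar\partial_{x_a}$ acting on $\Omega^\bullet_{\mani,odd}\otimes_{\stsheaf}\mathrm{gr}^\bullet\mathcal{D}_\mani\cong\Omega^\bullet_{\mani,odd}\otimes_{\stsheaf}S^\bullet\mathcal{T}_\mani$.

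Next I would compute the homology of this symbol complex, which is the $E_1$-page. Locally, up to the factor $\stsheaf$, it is the supercommutative bigraded algebra $\mathbb{C}[d\theta_\alpha]\otimes\wedge[dz_i]\otimes\mathbb{C}[\partial_{z_i}]\otimes\wedge[\partial_{\theta_\alpha}]$, and $\kappa$ is multiplication by the canonical element $\sum_i dz_i\otimes\partial_{z_i}+\sum_\alpha d\theta_\alpha\otimes\partial_{\theta_\alpha}$. Since this algebra is the (super) tensor product of the one-variable complexes attached to the individual coordinates, Künneth reduces the computation to two elementary cases: for an even coordinate $z_i$ the factor $\wedge[dz_i]\otimes\mathbb{C}[\partial_{z_i}]$ with differential $dz_i\otimes\partial_{z_i}$ has homology $\mathbb{C}\cdot dz_i$ (form-degree $1$, order $0$), while for an odd coordinate $\theta_\alpha$ the factor $\mathbb{C}[d\theta_\alpha]\otimes\wedge[\partial_{\theta_\alpha}]$ with differential $d\theta_\alpha\otimes\partial_{\theta_\alpha}$ has homology $\mathbb{C}\cdot\partial_{\theta_\alpha}$ (form-degree $0$, order $1$). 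Hence the total symbol-homology is the free rank-one $\stsheaf$-module generated by the class of $dz_1\cdots dz_p\otimes\partial_{\theta_1}\cdots\partial_{\theta_q}$, concentrated in form-degree $p$ and order $q$.

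Because $E_1$ is concentrated in a single bidegree, every higher differential vanishes for lack of a target, so $E_1=E_\infty$; and since the filtration is exhaustive and bounded below in each form-degree, the spectral sequence converges, forcing $H_j=0$ for $j\ne p$ and exhibiting $H_p$ as the rank-one free $\stsheaf$-module on the class of $dz_1\cdots dz_p\otimes\partial_{\theta_1}\cdots\partial_{\theta_q}$. I would then match this generator with the local generator $\phi(x)=[dz_1\cdots dz_p\otimes\partial_{\theta_1}\cdots\partial_{\theta_q}]$ of $\mathcal{B}er(\mani)$ introduced in the previous section, which is \emph{literally} the same expression appearing in the Koszul homology that defines the Berezinian.

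The step I expect to be the main obstacle is promoting this local identification to a \emph{canonical}, and hence global, isomorphism: one must check that under a change of coordinates the homology class of $dz_1\cdots dz_p\otimes\partial_{\theta_1}\cdots\partial_{\theta_q}$ transforms by the Berezinian of the Jacobian, exactly as $\phi(x)$ does. I would establish this structurally rather than by a brute-force transformation computation, by noting that $(\Omega^\bullet_{\mani,odd}\otimes_{\stsheaf}\mathcal{D}_\mani,D)$ is a complex of \emph{right} $\mathcal{D}_\mani$-modules with $D$ right $\mathcal{D}_\mani$-linear (the factor $F$ is only acted on from the right, and the signs in $D$ involve only $\omega$ and $x_a$), so that $H_p$ inherits a right $\mathcal{D}_\mani$-module structure; comparing this with Penkov's right $\mathcal{D}_\mani$-module structure on $\mathcal{B}er(\mani)$ \cite{Penkov} pins down the transition functions and yields the canonical isomorphism $H_\bullet(\Omega^\bullet_{\mani,odd}\otimes_{\stsheaf}\mathcal{D}_\mani,D)\cong\mathcal{B}er(\mani)$. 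The Künneth computation and the convergence bookkeeping are routine; the genuinely supergeometric content, and the main difficulty, is this matching of the two module structures and hence of the transformation laws.
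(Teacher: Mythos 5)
Your proposal is correct, but it follows a genuinely different route from the paper's. The paper argues by exhibiting an explicit local homotopy operator $H(\omega\otimes F)=\sum_a(-1)^{|x_a|(|\omega|+|\partial^J|+1)}\partial_{dx_a}dx^I\otimes[\partial^J,x_a]f$ for the noncommutative piece $D_2$, and computes the Euler-type identity $(HD_2+D_2H)(\omega\otimes F)=c\cdot(\omega\otimes F)$ with $c=p+q+\deg_0(\omega)+\deg_0(\partial^J)-\deg_1(\omega)-\deg_1(\partial^J)$, which vanishes exactly on the monomials $dz_1\cdots dz_p\otimes\partial_{\theta_1}\cdots\partial_{\theta_q}f$ generating $\mathcal{B}er(\mani)$; the contribution of $D_1$ is dispatched with a brief appeal to the usual de Rham homotopy. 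You instead filter by the order of differential operators (shifted by form degree), pass to the associated graded where $D_1$ dies and $D_2$ becomes the Koszul-type multiplication by $\sum_a dx_a\otimes\bar\partial_{x_a}$ on $\Omega^\bullet_{\mani,odd}\otimes_{\stsheaf}S^\bullet\mathcal{T}_\mani$, and compute the $E_1$-page by K\"unneth; since it is concentrated in a single bidegree, convergence is immediate. Your one-variable computations are correct (homology $\mathbb{C}\cdot dz_i$ for even coordinates, $\mathbb{C}\cdot\partial_{\theta_\alpha}$ for odd ones), and your filtration argument in fact makes rigorous the step the paper leaves terse, namely why the perturbation $D_1$ does not alter the answer; what you lose is the explicit homotopy, which the paper's later remarks implicitly reuse (both arguments remain purely algebraic, so both survive in the algebraic category). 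On the final identification with $\mathcal{B}er(\mani)$: the paper simply observes that the surviving local generator is literally the defining Koszul class $\phi(x)$, whereas you propose to pin down the transition functions by matching right $\mathcal{D}_\mani$-module structures against Penkov's. This is sound and arguably more principled (the class $[dz_1\cdots dz_p\otimes\partial_{\theta_1}\cdots\partial_{\theta_q}]$ is characterized up to constants by being killed by right multiplication by every $\partial_{x_a}$, which forces the Berezinian transformation law), but note that it inverts the paper's logical order: the paper \emph{derives} the right $\mathcal{D}_\mani$-module structure on $\mathcal{B}er(\mani)$ as a corollary of this theorem, while you import it from \cite{Penkov} as an input. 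There is no circularity since Penkov's construction is independent, but if you want the corollary to come for free as in the paper, you should phrase the identification the other way around.
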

\begin{proof} The proof can be done by constructing a homotopy for the operator $D$. Clearly, the first part of $D$, namely $D_1 = d \otimes 1$ has the usual homotopy of the de Rham complex. By the way elements of the form $c \otimes F$, for $c$ a 
constant and $F $ a generic element in $\mathcal{D}_\mani$ are not in the kernel of $D$.\\
Let us now look at the second summand, $D_2 (\omega \otimes F) = \sum_a (-1)^{|\omega| |x_a|} dx_a \omega \otimes \partial_a F$. We work in a chart $(U, x_a)$ such that the sheaf $\Omega^\bullet_{\mani, odd} \otimes_{\stsheaf} \mathcal{D}_\mani$ can 
be represented as the sheaf of vector spaces generated by the monomials of the form $ \omega \otimes F $ with $\omega = dx^I$, $F = \partial^J f$ for some multi-indices $I$ and $J$ and some $f \in \stsheaf \lfloor_{U}$. We define the following operator 
on $ \Omega^\bullet_{\mani, odd} \otimes \mathcal{D}_\mani \lfloor_{U}$ 
\bear
H (\omega \otimes F) \defeq \sum_a (-1)^{|x_a|( |\omega| + |\partial^J| + 1)}{\partial_{dx_a}} dx^I \otimes [\partial^J , x_a] f,
\eear
where the derivation $\partial_{dx_a}$ can be seen as the contraction with respect to the coordinate vector field $\partial_a$ (up to a sign). We claim that $H$ is a homotopy. By explicit computation, one has
\begin{align}
D_2 H (\omega \otimes F) 
& = \sum_{a,b} (-1)^{|x_a| (|x_b| + 1 + |\omega|) + |x_b|(|\omega| + |\partial^J| + 1)} dx_a \partial_{dx_b} \omega \otimes  \partial_{a} [\partial^J, x_b] f, 
\end{align}
\begin{align} \label{hd2}
H D_2 (\omega \otimes F) 
& = \sum_{a, b} (-1)^{|x_b| (|\omega| + |\partial^J|) + |x_a| |\omega|} \partial_{dx_b} (dx_a \omega) \otimes [\partial_b \partial^J , x_a ] f.
\end{align}
Expanding the above equation \eqref{hd2}, one finds
\begin{align} \label{hd3}
H D_2 (\omega \otimes F) & = - D_2 H (\omega \otimes F) + \sum_{a,b} (-1)^{|\omega| (|x_a| + |x_b|)}\delta_{a b} \omega \otimes \partial^J f + \nonumber \\
& \quad +  \sum_a (-1)^{|x_a||\partial^J|} \omega \otimes \partial_a [\partial^J, x_a] f + \sum_a (-1)^{|x_a|+ 1} dx_a (\partial_{dx_a} \omega) \otimes \partial^J f .
\end{align}
We now analyze the summands in \eqref{hd3}. If $x_a = z_1, \ldots, z_p | \theta_1, \ldots, \theta_q $, recalling that $\omega = dx^I$, we define $\mbox{deg}_0 (\omega)$ to be the degree of $\omega$ with respect to the even generators 
($d\theta$'s) and $\mbox{deg}_1 (\omega)$ to be the degree of $\omega$ with respect to the odd generators ($dz$'s) and likewise we pose $\mbox{deg}_0 (\partial^J) $ to be the degree of $\partial^J$ with respect to the even generators ($\partial_z$'s) 
and $\mbox{deg}_1 (\partial^J) $ to be the degree of $\partial^J$ with respect to the odd generators ($\partial_\theta$'s). With these definitions, one can observe that 
\begin{align}
& \sum_{ab} (-1)^{|\omega|(|x_a| + |x_b|)} \delta_{ab} \omega \otimes \partial^J f = (p+q) (\omega \otimes F),  \nonumber \\
& \sum_{a} (-1) ^{|x_a||\partial^J|} \omega \otimes \partial_a [\partial^J, x_a] f = \sum_a (-1)^{|x_a|} \omega \otimes \partial^J f = ( \mbox{deg}_0 (\partial^J) -  \mbox{deg}_1 (\partial^J) )(\omega \otimes F), \nonumber \\
& \sum_{a} (-1)^{|x_a|+ 1} dx_a (\partial_{dx_a} \omega) \otimes \partial^J f = ( \mbox{deg}_0 (\omega) -  \mbox{deg}_1 (\omega) )(\omega \otimes F).
\end{align}
This yields
\begin{align}
(HD_2 + D_2 H) (\omega \otimes F) = \left ( p + q + \mbox{deg}_0 (\omega) + \mbox{deg}_0 (\partial^J) - \mbox{deg}_1 (\omega) - \mbox{deg}_1 (\partial^J) \right ) (\omega \otimes F).
\end{align}
It follows that $(HD_2 + D_2 H) (\omega \otimes F) = c \cdot (\omega \otimes F)$ for $c$ a constant, which proves the claim that $H$ defines a homotopy if $c \neq 0$.
The homotopy fails in the case $c= 0$. In particular, note that, by anticommutativity, $\mbox{deg}_1 (\omega) \leq p $ and $\mbox{deg}_1 (\partial^J) \leq q$, since there can only be $p$ odd forms $dz_1\cdot  \ldots \cdot dz_p$ and $q$ odd derivation 
$\partial_{\theta_1} \cdot \ldots \cdot \partial_{\theta_q}$, hence $c=0$ if and only if
\bear
\left \{ 
\begin{array}{l}
\mbox{deg}_0 (\omega) = \mbox{deg}_0 (\partial^J) = 0\\
\mbox{deg}_1 (\omega) = p \\
\mbox{deg}_1 (\partial^J) = q.
\end{array}
\right.
\eear
In this case the monomial $ \omega \otimes F$ is of the form 
$ dz_1 \ldots dz_p \otimes \partial_{\theta_1} \ldots \partial_{\theta_q} \cdot f$
for $f \in \mathcal{O}_\mani \lfloor_{U}$: this element generates the Berezinian sheaf $\mathcal{B} er(\mani)$ and it is non-zero in the homology $H_\bullet (\Omega^\bullet_{\mani, odd} \otimes \mathcal{D}_\mani, D)$, thus concluding the proof.\end{proof}
{\remark We observe that the previous Theorem holds true in any \virgolette geometric'' category: $\mani$ might be a {real smooth} or a {complex analytic} supermanifold, but also an {algebraic} supermanifold.}
{\remark It is proved in \cite{Penkov} that the Berezinian sheaf of a supermanifold carries a structure of {right} $\mathcal{D}_\mani$-module. This is constructed via the action of the Lie derivative on sections of $\mathcal{B}er (\mani),$ which 
somehow parallels the analogous result on the canonical sheaf ${K}_{M}$ of an ordinary manifold $M$. Indeed, it is an easy application of Cartan calculus to see that if $\omega$ is a section of the canonical sheaf of $M$, with local trivialization given by 
$\omega(x) f \defeq dx_1 \wedge \ldots dx_p f$, for $f \in \mathcal{O}_\mani$, then $\mathcal{L}_X (\omega ) = \omega (x) \sum_a \partial_{a} (f X^i) $ for any vector field $X = \sum_i X^i \partial_i.$ It is then not difficult to show that defining a right action 
$K_M \otimes {T}_M \rightarrow K_M$ on vector fields as $\omega \otimes X \mapsto \omega \cdot X \defeq - \mathcal{L}_{X} (\omega )$ endows $K_M$ with the structure of {right} $\mathcal{D}_M$-module - indeed the former action defines a 
\emph{flat} right connection on $K_M$, see \cite{Manin}. 
The same holds true in the case of the Berezinian sheaf on a supermanifold, but the construction of the action of the Lie derivative is not as straightforward as in the ordinary case, since the Berezinian is {not} a sheaf of forms and therefore there is no obvious 
generalization of the Cartan calculus on it. Nonetheless, it can be shown - for example via an analytic computation using the flow along a vector field (see also \cite{Penkov}) - that 
\bear \label{LieBer}
\mathcal{L}_X (\varphi) = (-1)^{|\varphi (x)||X|} \varphi (x) \sum_a (-1)^{|X^a| (|x_a| + |f|)}\sum_a \partial_a (fX^a),   
\eear
where $\varphi$ is a section of the Berezinian sheaf with local trivialization given by $\varphi = \varphi (x) f$, where $f \in \stsheaf$ and where $\varphi(x)$ is the generating section of $\mathcal{B}er (\mani)$. Notice that this can be re-written, more 
simply, as $\mathcal{L}_X (\varphi) = (-1)^{|\varphi||X|} \varphi(x) \sum_a (fX^a) \overset{\leftarrow}{\partial_a}$ if one lets the derivative acts from the right, borrowing the notation from physics. The right action of vector fields making $\mathcal{B}er (\mani)$ 
into a sheaf of right $\mathcal{D}_\mani$-modules is then defined as \cite{Penkov} 
\bear
\xymatrix@R=1.5pt{
\mathcal{B}er (\mani) \otimes \mathcal{T}_\mani \ar[r] & \mathcal{B}er (\mani) \\
\varphi \otimes X \ar@{|->}[r] & \varphi \cdot X \defeq - (-1)^{|\varphi||X| }\mathcal{L}_X (\varphi).
}
\eear
It follows that, taking into account the action \eqref{LieBer} of the Lie derivative, one gets:
\bear \label{actionLie}
\varphi \cdot X = - \varphi(x)\sum_a (-1)^{|x_a| (|X^a| + |f|)}  \partial_a (fX^a).
\eear
It is worth noticing that the above construction, which might look somewhat artificial at first sight, comes for free from the homology of the universal de Rham complex as in Theorem \ref{UniversalDR}. The action of the Lie derivative on sections of the Berezianian 
emerges naturally and effortlessly as a consequence of the fact 
that we are working \emph{ab initio} with a complex of $\mathcal{D}_\mani$-modules. 
Indeed, the previous Theorem \ref{UniversalDR} has the following easy Corollary. }
\begin{corollary}[$\mathcal{B}er (\mani)$ is a Right $\mathcal{D}_\mani$-Module / Lie Derivative] Let $\mani$ be a supermanifold. The right action 
\bear
H_\bullet (\Omega^\bullet_{\mani, odd} \otimes_{\stsheaf} \mathcal{D}_\mani, D ) \otimes_{\stsheaf } \mathcal{D}_\mani \longrightarrow H_\bullet (\Omega^\bullet_{\mani, odd} \otimes_{\stsheaf} \mathcal{D}_\mani, D )
\eear
is uniquely characterized by $\varphi(x) \cdot \partial_a \defeq [dz_1 \ldots dz_p \otimes \partial_{\theta_1} \otimes \ldots \partial_{\theta_q}] \cdot \partial_a = 0 $ for any $a$, and it is given by the Lie derivative on $\mathcal{B}er (\mani).$
\end{corollary}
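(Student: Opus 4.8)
The plan is to read off the right $\mathcal{D}_\mani$-module structure directly from the universal de Rham complex, to evaluate it on the local generator of $\mathcal{B}er(\mani)$, and then to identify it with the Lie derivative action of \eqref{actionLie} by a uniqueness argument. First I would note that right multiplication by $\mathcal{D}_\mani$, namely $(\omega\otimes F)\cdot G = \omega\otimes(F\cdot G)$, commutes with $D$: the operator $D$ acts by $d$ on the form factor and by \emph{left} multiplication by the $\partial_{x_a}$ on the $\mathcal{D}_\mani$ factor, whereas $\cdot\, G$ acts on the far right of the $\mathcal{D}_\mani$ factor, so associativity in $\mathcal{D}_\mani$ gives $D\big((\omega\otimes F)\cdot G\big)=D(\omega\otimes F)\cdot G$. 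Hence $D$ is a morphism of right $\mathcal{D}_\mani$-modules and, through the canonical isomorphism of Theorem \ref{UniversalDR}, the homology $\mathcal{B}er(\mani)$ inherits a right $\mathcal{D}_\mani$-module structure extending its right $\stsheaf$-module structure.

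Next I would evaluate this action on the generator $\varphi(x)=[dz_1\cdots dz_p\otimes\partial_{\theta_1}\cdots\partial_{\theta_q}]$ and show $\varphi(x)\cdot\partial_a=0$ for every $a$, working at chain level. For an odd coordinate the representative $dz_1\cdots dz_p\otimes\partial_{\theta_1}\cdots\partial_{\theta_q}\partial_{\theta_\beta}$ already vanishes in $\mathcal{D}_\mani$, since the odd operator $\partial_{\theta_\beta}$ then occurs twice and squares to zero. For an even coordinate $\partial_{z_i}$ I would exhibit the explicit primitive $\eta=dz_1\cdots\widehat{dz_i}\cdots dz_p\otimes\partial_{\theta_1}\cdots\partial_{\theta_q}$: because $d(dz_1\cdots\widehat{dz_i}\cdots dz_p)=0$, the only surviving summand of $D\eta$ is the one with $a=z_i$ — every other $dz_j$ repeats an existing odd generator and each $d\theta_\beta$ term annihilates $\partial_{\theta_1}\cdots\partial_{\theta_q}$ — giving $D\eta=\pm\, dz_1\cdots dz_p\otimes\partial_{\theta_1}\cdots\partial_{\theta_q}\partial_{z_i}=\pm\,\varphi(x)\cdot\partial_{z_i}$, where I use that the even field $\partial_{z_i}$ commutes past the $\partial_\theta$'s. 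Thus $\varphi(x)\cdot\partial_{z_i}$ is a $D$-boundary and vanishes in homology.

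Finally I would settle uniqueness and the identification with the Lie derivative. Since $\mathcal{B}er(\mani)$ is locally generated over $\stsheaf$ by $\varphi(x)$ and $\mathcal{D}_\mani$ is generated as an algebra by $\stsheaf$ and the $\partial_{x_a}$, any right $\mathcal{D}_\mani$-module structure extending the right $\stsheaf$-structure is determined by the values $\varphi(x)\cdot\partial_a$: writing a general section as $\varphi(x)f$ and using the Weyl relation $f\partial_a=(-1)^{|x_a||f|}\big(\partial_a f-(\partial_a f)\big)$ together with the module axiom, one obtains $\varphi(x)f\cdot\partial_a=-(-1)^{|x_a||f|}\varphi(x)(\partial_a f)$, which involves only the right $\stsheaf$-action and the prescribed value $\varphi(x)\cdot\partial_a=0$. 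Hence $\varphi(x)\cdot\partial_a=0$ pins down the action uniquely. The Lie derivative action \eqref{actionLie} is also a right $\mathcal{D}_\mani$-module structure (the content of the preceding remark, after Penkov) and it satisfies $\varphi(x)\cdot\partial_a=0$ trivially, since for $f=1$ and $X=\partial_a$ the coefficients are constant and their derivatives vanish; by uniqueness the two actions coincide. The main obstacle I anticipate is the chain-level even-coordinate computation — correctly choosing $\eta$ and tracking the signs so that $D\eta=\pm\varphi(x)\cdot\partial_{z_i}$ with no leftover terms — together with the more delicate point of verifying that the right $\stsheaf$-structure inherited by the homology is genuinely the one carried by $\mathcal{B}er(\mani)$, so that the uniqueness argument legitimately forces the homological action to equal the Lie derivative.
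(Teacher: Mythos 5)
Your proposal is correct and follows essentially the same route as the paper: establish that $\varphi(x)\cdot\partial_a=0$ in the homology, observe that this together with the Weyl relation $f\partial_a=(-1)^{|x_a||f|}(\partial_a f-(\partial_a f))$ pins down the action on a general section $\varphi(x)f$, and match the result with \eqref{actionLie}. The only difference is one of detail: you supply the chain-level verification (the primitive $\eta=dz_1\cdots\widehat{dz_i}\cdots dz_p\otimes\partial_{\theta_1}\cdots\partial_{\theta_q}$ for the even case, the repeated odd factor for the odd case) and the compatibility of $D$ with right multiplication, both of which the paper dismisses as ``easily checked.''
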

\begin{proof} One easily checks that in the homology of $D$ one has $[dz_1 \ldots dz_p \otimes \partial_{\theta_1} \otimes \ldots \partial_{\theta_q} \partial_a ] = 0 $ for any $a$, which characterizes the right action of $\mathcal{D}_\mani$ on 
$H_\bullet(\Omega^\bullet_{\mani, odd} \otimes_{\stsheaf} \mathcal{D}_\mani, D ) \cong \mathcal{B}er (\mani).$ \\
Explicitly, for a section $\varphi = dz_1 \ldots dz_p \otimes \partial_{\theta_1} \otimes \ldots \partial_{\theta_q}  f $ of the Berezinian, and a generic vector fields $X = \sum_a X^a \partial_a $, one computes using the 
$\mathcal{D}_\mani$-module structure
\begin{align} \label{lieber1}
\varphi \cdot X & = \varphi(x) \sum_a (-1)^{|x_a| (|X^a| + |f|)} \left ( - \partial_a (fX^a) + \partial_a \cdot f X^a \right ) \nonumber \\
& = -\varphi(x) \sum_a (-1)^{|x_a| (|X^a| + |f|)} \partial_a (f X^a),
\end{align} 
where we have used that the second summand is zero in the homology, so that \refeq{lieber1} matches the previous \eqref{actionLie}.
\end{proof}
\noindent Before we pass to the next section, let us stress that \cite{Manin} offers a different but related point of view, closer to the one given in \cite{Penkov}, where the notion of $\mathcal{D}_\mani$-module, and in particular the construction of the 
$\mathcal{D}_\mani$-module structure on $\mathcal{B}er (\mani)$ is left understood, but implied by the exposition. To retrive the $\mathcal{D}_\mani$-module structure from \cite{Manin} one would further need to prove that the right connection defined on 
$\mathcal{B}er (\mani)$ is flat: this actually coincide with 
the \eqref{LieBer}.

\section{Universal Spencer Complex and its Homology}

\noindent We now repeat the above construction by using the dual $(\Omega^\bullet_{\mani, odd})^\ast$ of the de Rham complex instead. We start with the following definition, \emph{cfr}.\ \cite{Sch95}
\begin{definition}[Universal Spencer Sheaf of $\mani$] Given a supermanifold $\mani$, we call the sheaf $ \mathcal{D}_\mani \otimes_{\stsheaf} (\Omega^\bullet_{\mani, odd})^\ast$ the universal Spencer sheaf of $\mani$.
\end{definition}
\noindent Just like above, we would like to make the universal Spencer sheaf into an actual complex, by introducing a nilpotent differential on it and then computing its homology. We will see that this differential is more complicated with respect to the previous operator $D$ for the universal de Rham complex. \\
 In order to get such a differential, we first need to study the Lie derivative on the polyfields $(\Omega^\bullet_{\mani, odd})^\ast$. These can be defined recursively as follows.
\begin{definition}[Lie Derivate on $(\Omega^{\bullet}_{\mani, odd} )^\ast$] \label{LieStar} Let $X \in \mathcal{T}_\mani$ be a vector field. The Lie derivative 
$\mathfrak{L}_X : (\Omega^{\bullet}_{\mani, odd})^\ast \rightarrow (\Omega^{\bullet}_{\mani, odd})^\ast$ are defined recursively via the following relations
\begin{enumerate}
\item $\mathfrak{L}_X (f) = X (f) = \mathcal{L}_X (f)$ for any $ f \in \mathcal{O}_\mani$ where $\mathcal{L}_X$ is the usual Lie derivative;
\item Having already defined $\mathfrak{L}_X : (\Omega^{h}_{\mani, odd})^\ast \rightarrow (\Omega^{h}_{\mani, odd})^\ast$ for $ h<k $, one uniquely defines $\mathfrak{L}_X$ on $(\Omega^{k}_{\mani, odd})^\ast$ via the relation
\bear
\mathfrak{L}_X (\langle \omega, \tau \rangle) = \langle \mathcal{L}_X (\omega) , \tau \rangle + (-1)^{|\omega| |X|} \langle \omega, \mathfrak{L}_X (\tau) \rangle \qquad \forall \omega \in \Omega^{\bullet >0}_{\mani, odd}.   
\eear
\end{enumerate}
\end{definition}
\noindent The following Lemma characterizes the properties of the Lie derivative on $(\Omega^{\bullet}_{\mani, odd})^\ast$. For the sake of the exposition, we have deferred its proof to the Appendix. 
\begin{lemma} \label{PropLie} The Lie derivative $\mathfrak{L}_X : (\Omega^{\bullet}_{\mani, odd})^\ast \rightarrow (\Omega^{\bullet}_{\mani, odd})^\ast$ has the following properties: 
\begin{enumerate}
\item $\mathfrak{L}_X (\tau) = \pi [X , \pi \tau]$ for any $\tau \in \Pi \mathcal{T}_\mani$;
\item $\mathfrak{L}_X$ is a superderivation of $(\Omega^{\bullet}_{\mani, odd})^\ast$, \emph{i.e.}\ the super Leibniz rule holds true:
\bear \label{superd}
\mathfrak{L}_X  (\tau_1 \tau_2) = \mathfrak{L}_X (\tau_1) \tau_2 + (-1)^{|X||\tau_1|} \tau_1 \mathfrak{L}_{X} (\tau_2)
\eear
for any $\tau_1, \tau_2 \in (\Omega^{\bullet}_{\mani, odd})^\ast$ and $X \in \mathcal{T}_\mani;$
\item $\mathfrak{L}_{fX} (\tau) = f \mathfrak{L}_{X} (\tau) + (-1)^{|X||f|} \pi X \langle df, \tau \rangle$ for any $f \in \mathcal{O}_\mani$, $\tau \in (\Omega^{\bullet}_{\mani, odd})^\ast$.
\end{enumerate}
\end{lemma}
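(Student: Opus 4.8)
The common thread for all three items is the \emph{nondegeneracy} of the contraction pairing: in local coordinates one has $\langle dx_a, -\rangle = \partial_{\pi\partial_{x_a}}$, and these derivations have no common kernel on $(\Omega^\bullet_{\mani,odd})^\ast$ in positive $S^\bullet$-degree, so a polyfield of positive degree is determined by its contractions $\langle\omega,\tau\rangle$ against all $1$-forms $\omega\in\Omega^1_{\mani,odd}$. This is exactly what makes the recursion of Definition \ref{LieStar} meaningful, and it will let me reduce each identity to a claim about such contractions, which lowers the degree and feeds an induction. I will also use repeatedly that the form Lie derivative $\mathcal{L}_X$ is a superderivation of $\Omega^\bullet_{\mani,odd}$ of parity $|X|$ graded-commuting with the odd differential, i.e.\ $\mathcal{L}_X\, d = (-1)^{|X|}\, d\,\mathcal{L}_X$, and in particular $\mathcal{L}_X(df) = (-1)^{|X|}d(Xf)$.

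To prove (1), I would specialize the defining relation of Definition \ref{LieStar}(2) to $\omega = df$ with $f\in\mathcal{O}_\mani$ and to $\tau = \pi Y\in\Pi\mathcal{T}_\mani$. Since $\langle df, \pi Y\rangle = (-1)^{(|Y|+1)(|f|+1)}Y(f)$ is a function, the left-hand side becomes $\mathfrak{L}_X$ of a function, i.e.\ $X$ applied to it; inserting $\mathcal{L}_X(df) = (-1)^{|X|}d(Xf)$ into the first term on the right turns the relation into an identity among $X(Y(f))$, $Y(X(f))$ and the unknown $\langle df, \mathfrak{L}_X(\pi Y)\rangle$. Solving for the last contraction and comparing, for every $f$, with $\langle df, \pi[X,\pi\tau]\rangle$, nondegeneracy forces $\mathfrak{L}_X(\tau) = \pi[X,\pi\tau]$, where $[\,,\,]$ is the supercommutator of vector fields; the bookkeeping collapses to the honest supercommutator precisely because of the sign $(-1)^{|X|}$ in the graded commutation of $\mathcal{L}_X$ with $d$.

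To prove (2), I would induct on the total $S^\bullet$-degree of $\tau_1\tau_2$, the base cases being products in which one factor is a function — where the rule reduces to the derivation property of $X$ on $\mathcal{O}_\mani$ — and the degree-one by degree-one case, which follows from (1) together with the graded Leibniz identity in the Lie superalgebra of vector fields. For the inductive step I would set $\Delta \defeq \mathfrak{L}_X(\tau_1\tau_2) - \mathfrak{L}_X(\tau_1)\,\tau_2 - (-1)^{|X||\tau_1|}\tau_1\,\mathfrak{L}_X(\tau_2)$ and contract with an arbitrary $1$-form $\omega$. Because $\langle\omega,-\rangle$ is itself a superderivation of $(\Omega^\bullet_{\mani,odd})^\ast$, and because Definition \ref{LieStar}(2) is exactly the Cartan-type relation $\mathfrak{L}_X\langle\omega,\tau\rangle = \langle\mathcal{L}_X\omega,\tau\rangle + (-1)^{|\omega||X|}\langle\omega,\mathfrak{L}_X\tau\rangle$, I can rewrite $\langle\omega,\Delta\rangle$ entirely through $\mathfrak{L}_X$ applied to polyfields of strictly smaller degree, on which the derivation property already holds by induction. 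All contributions cancel, giving $\langle\omega,\Delta\rangle = 0$ for every $\omega$ and hence $\Delta = 0$ by nondegeneracy.

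Finally, for (3) I would again induct on the degree of $\tau$. In degree $0$ both sides collapse to $(fX)(g) = fX(g)$, using $\langle df, g\rangle = 0$ for a function $g$; in degree $1$, invoking (1), the claim becomes a supercommutator identity $\pi[fX,\pi\tau] = f\,\pi[X,\pi\tau] + (-1)^{|X||f|}\,\pi X\,\langle df,\tau\rangle$ that is a direct consequence of the Leibniz rule for $\pi\tau$ acting on the product $fX$. For the inductive step I would contract both sides with a $1$-form $\omega$, expand the left-hand side through Definition \ref{LieStar}(2), and substitute the super-Cartan identity $\mathcal{L}_{fX}\omega = f\,\mathcal{L}_X\omega + (df)\,\iota_X\omega$ (with its Koszul sign, $\iota_X$ the interior product) for the form Lie derivative; the inductive hypothesis clears the lower-degree contractions and nondegeneracy concludes. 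In every item the genuine difficulty is not conceptual — the engine is uniformly \virgolette reduce to contractions, then induct on degree'' — but the coherent tracking of Koszul signs: the emergence of the \emph{exact} supercommutator in (1), and the matching of the sign $(-1)^{|X||f|}$ in (3) against the signs in the super-Cartan formula and in the left-module ordering of $(\Omega^\bullet_{\mani,odd})^\ast$, are the points where the parity conventions for $d$, for the pairing, and for $\mathfrak{L}_X$ must all be used together, and this is the step I expect to demand the most care.
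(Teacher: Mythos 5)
Your proposal is correct and follows essentially the same route as the paper: parts (1) and (2) are proved there exactly as you describe, by contracting with $1$-forms, feeding the result into the defining recursion, using the nondegeneracy of the pairing, and inducting on the symmetric degree. The only cosmetic difference is in part (3), where the paper, after verifying the identity on $\Pi \mathcal{T}_\mani$ via part (1), concludes by observing that $\mathfrak{L}_{fX}$, $f\mathfrak{L}_X$ and $\pi X \langle df, \,\cdot\, \rangle$ are all superderivations of $S^\bullet \Pi \mathcal{T}_\mani$ of the same parity and hence agree everywhere once they agree on generators, whereas you run one more contraction induction using the super-Cartan formula for $\mathcal{L}_{fX}$ on forms --- both work.
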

\noindent Now, using the Lie derivative on $(\Omega^{\bullet}_{\mani, odd})^\ast$ we introduce the following \emph{local} operator. 
\begin{definition}[The Operator $\mathfrak{e}_x $] Let $\tau \in (\Omega^{\bullet}_{\mani, odd})^\ast$. We let the operator $\mathfrak{e}_x$ be defined as 
\bear \xymatrix@R=1.5pt{
\mathfrak{e}_x : (\Omega^{\bullet}_{\mani, odd})^\ast \ar[r] & (\Omega^{\bullet}_{\mani, odd})^\ast \\
\tau \ar@{|->}[r] & \mathfrak{e}_x (\tau) \defeq \sum_a \langle dx_a , \mathfrak{L}_{\partial_a} (\tau) \rangle. 
}
\eear
\end{definition}
{\remark We observe that $\mathfrak{e}_x$ is {not} invariant under general change of coordinates. Also, it is {not} a derivation. On the other hand, it has the following property
\bear \label{prop_e}
\mathfrak{e}_x (f\tau) = (-1)^{|f|} f \mathfrak{e}_x (\tau) + \sum_a (-1)^{|f| (|x_a| + 1)} (\partial_a f)  \langle dx_a , \tau\rangle,
\eear
which follows from a direct computation.}
\noindent We now introduce the following fundamental operator.
\begin{definition}[The Operator $\delta$] Let $F \otimes \tau \in \mathcal{D}_\mani \otimes_{\stsheaf} (\Omega^{\bullet}_{\mani, odd})^\ast$ such that $F$ and $\tau$ are homogeneous. We let $\delta$ be the operator 
\bear
\xymatrix@R=1.5pt{
\delta :  \mathcal{D}_\mani \otimes_{\mathbb{C}} (\Omega^{\bullet}_{\mani, odd})^\ast  \ar[r] & \mathcal{D}_\mani \otimes_{\stsheaf} (\Omega^{\bullet}_{\mani, odd})^\ast \\
F\otimes \tau \ar@{|->}[r] &  \delta \left (F \otimes \tau \right) \defeq  (-1)^{|\tau|} F \sum_a\partial_{a} \otimes \langle dx_a, \tau \rangle - (-1)^{|\tau|} F \otimes \mathfrak{e}_{x} (\tau),
 }
\eear
where the index $a$ runs over all of the even and odd coordinates. 
\end{definition} 
\noindent Notice that, differently from the operator $D$ on the universal de Rham complex, it is not at all apparent whether the operator $\delta$ is well-defined globally on $\mani$ or it is just a local operator. In the following Lemma, which is the 
analogous 
of Lemma \ref{propD} for $D$, we prove the properties of $\delta$. In particular, we prove that $\delta$ is invariant: this happens because of a \virgolette magical'' cancellation between the two transformed summands that appear in the definition of $\delta$, which are clearly not invariant when taken alone. 
\begin{lemma} \label{propdelta} The operator $ \delta$ has the following properties:
\begin{enumerate}
\item it is globally well-defined, \emph{i.e.}\ it is invariant under generic change of coordinates; 
\item it is $\mathcal{O}_\mani$-defined in the sense of definition \ref{defOM}, \emph{i.e.}\ it induces an operator $\delta : \mathcal{D}_\mani \otimes_{\stsheaf} (\Omega^\bullet_{\mani})^{\ast} \rightarrow \mathcal{D}_\mani \otimes_{\stsheaf} (\Omega^\bullet_{\mani})^{\ast};$
\item it is nilpotent, \emph{i.e.}\ $\delta^2 = 0$.
\end{enumerate}
\end{lemma}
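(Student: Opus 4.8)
The plan is to verify the three properties in the order (2), (1), (3), since $\mathcal{O}_\mani$-definedness and invariance are what make the operator globally meaningful before one can sensibly square it. Throughout I write $\delta = \delta_1 - \delta_2$, where
\[
\delta_1(F \otimes \tau) \defeq (-1)^{|\tau|} F \sum_a \partial_a \otimes \langle dx_a, \tau\rangle, \qquad \delta_2(F \otimes \tau) \defeq (-1)^{|\tau|} F \otimes \mathfrak{e}_x(\tau),
\]
tracking the sign $(-1)^{|\tau|}$ with care and keeping in mind that $\delta$ lowers the $S^\bullet$-degree of the polyfield by one and therefore reverses its parity at each application.

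For the $\mathcal{O}_\mani$-definedness (point 2) I would compute $\delta(Ff \otimes \tau)$ and $\delta(F \otimes f\tau)$ separately and match them. In $\delta_1(Ff \otimes \tau)$ the factor $f$ sits to the right of $F$ in $\mathcal{D}_\mani$; pushing the coordinate field $\partial_a$ past it via the Weyl relation $[\partial_a, x_b] = \delta_{ab}$ produces, besides the expected term $Ff\,\partial_a$, an anomalous contribution proportional to $(\partial_a f)\langle dx_a, \tau\rangle$. On the other side, $\delta_2(F \otimes f\tau)$ is governed by property \eqref{prop_e}, whose non-$\mathcal{O}_\mani$-linear part is exactly $(-1)^{|f|(|x_a|+1)}(\partial_a f)\langle dx_a, \tau\rangle$. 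The whole point is that these two anomalous families coincide up to the overall signs, so that after cancellation one obtains $\delta(Ff \otimes \tau) = \delta(F \otimes f\tau)$. This is a finite sign-bookkeeping exercise using only \eqref{prop_e}, the Weyl relations, and the ($\mathcal{O}_\mani$-linear) behaviour of the contraction $\langle dx_a, \cdot\rangle$.

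The invariance (point 1) is the crux, and I expect it to be the main obstacle. Neither $\delta_1$ nor $\delta_2$ is invariant on its own: passing to a second coordinate system $y$, one has $\partial_{x_a} = \sum_b (\partial_{x_a} y_b)\,\partial_{y_b}$ and dually $dx_a = \sum_c dy_c\,(\partial_{y_c} x_a)$, and in $\delta_1$ the Jacobian factors must be transported across the tensor product $\otimes_{\stsheaf}$ and across the coordinate field sitting in $\mathcal{D}_\mani$; this is where a second-order term, proportional to $\partial_{x_a}\partial_{x_b}(y_c)$, is generated. In $\delta_2$ the same phenomenon occurs through $\mathfrak{e}_x$: rewriting $\mathfrak{L}_{\partial_{x_a}}$ in the $y$-frame and applying property (3) of Lemma \ref{PropLie} for $\mathfrak{L}_{fX}$ produces a matching second-order family. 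I would organize the computation so that all manifestly invariant pieces reassemble into $\delta_{1,y} - \delta_{2,y}$, and then show that the leftover anomalous terms cancel. The cancellation is forced by the symmetry of the second derivatives of the coordinate change (equality of mixed partials, with the appropriate super-signs) together with the defining Weyl relations; this is precisely the \virgolette magical'' cancellation advertised before the Lemma. Conceptually, such a cancellation must exist because $\delta$ is adjoint to the invariant operator $D$ of Lemma \ref{propD} under the duality pairing $\Omega^\bullet_{\mani, odd} \leftrightarrow (\Omega^\bullet_{\mani, odd})^\ast$, but I would still carry out the explicit coordinate check, since it is exactly these anomalous terms that one must see vanish.

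Finally, for nilpotency (point 3), having secured that $\delta$ is a well-defined global $\mathcal{O}_\mani$-operator, I would work in a single chart and expand $\delta^2 = \delta_1^2 - (\delta_1\delta_2 + \delta_2\delta_1) + \delta_2^2$, being scrupulous about the parity-reversing sign at each of the two applications. The term $\delta_1^2$ carries the product $\partial_a\partial_b$ in $\mathcal{D}_\mani$, which is graded-symmetric in $(a,b)$ since $[\partial_a,\partial_b]=0$, against the double contraction $\langle dx_b, \langle dx_a, -\rangle\rangle$, which is graded-antisymmetric; it therefore vanishes, just as the pure second-order parts did in Lemma \ref{propD}. The mixed terms $\delta_1\delta_2 + \delta_2\delta_1$ and the pure term $\delta_2^2$ are controlled by the commutativity of the coordinate fields, which yields $[\mathfrak{L}_{\partial_a},\mathfrak{L}_{\partial_b}] = \mathfrak{L}_{[\partial_a,\partial_b]} = 0$ on $(\Omega^\bullet_{\mani, odd})^\ast$, together with the Weyl relation $[\partial_a, x_b]=\delta_{ab}$ needed to commute fields past the coordinate functions hidden in $\mathfrak{e}_x$. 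Collecting terms, every contribution pairs off and $\delta^2 = 0$; as with $D$, the only genuinely structural inputs are the vanishing of these (super)commutators, the remainder being sign bookkeeping.
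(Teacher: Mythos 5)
Your proposal follows essentially the same route as the paper's proof: the same splitting of $\delta$ into the two summands, the same matching of the two anomalous families for $\mathcal{O}_\mani$-definedness (the Weyl relation moving $\partial_a$ past $f$ on the $\mathcal{D}_\mani$ side against property \eqref{prop_e} of $\mathfrak{e}_x$ on the other), the same cancellation of the two second-order Jacobian terms for invariance, and the same term-by-term vanishing of $\delta_1^2$, $\delta_2^2$ and the cross terms for nilpotency. The only cosmetic difference is that in the paper the invariance cancellation is not a matter of equality of mixed partials: both anomalous terms are literally the same contracted expression $\sum_a \partial_{x_a}\!\left(\partial x_a/\partial z_b\right)$, one produced by the $\mathcal{D}_\mani$-commutator in $\delta_1$ and one by the $\mathfrak{L}_{fX}$ rule in $\mathfrak{e}_x$, entering with opposite signs.
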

\begin{proof} We prove separately the three claims of the Lemma.  
\begin{enumerate}
\item We start proving the invariance of the operator under general change of coordinates. Adopting Einstein's convention on repeated indices, one computes
\begin{align} \label{eq_1}
\mathfrak{e}_z (\tau) & = \langle dz_b , \mathfrak{L}_{\frac{\partial}{\partial{z_b}}} (\tau) \rangle = \langle dx_a \frac{\partial z_b}{\partial x_a} , \mathfrak{L}_{\frac{\partial x_c}{\partial z_b} \frac{\partial}{\partial x_{c}}} (\tau ) \rangle \nonumber \\
& = \mathfrak{e}_x (\tau) - (-1)^{ |x_a||z_c| + |x_a|} \langle \frac{\partial}{\partial x_a} \left( \frac{\partial x_a}{\partial z_c} \right )  dz_c , \tau \rangle.
\end{align}
On the other hand one has 
$\frac{\partial}{\partial z_b} \otimes \langle dz_b , \tau \rangle  = \frac{\partial x_a}{\partial z_b} \frac{\partial }{\partial x_a}  \otimes \langle dx_c \frac{\partial z_b}{\partial x_c} , \tau \rangle. $
Upon using the $\mathcal{D}_\mani$-module relation
one has that 
$\frac{\partial x_a}{\partial z_b} \frac{\partial}{\partial x_a } = (-1)^{|x_a| + |x_a||z_b| } \left (\frac{\partial}{\partial x_a} \frac{\partial x_a}{\partial z_b}  - \frac{\partial}{\partial x_a} \left ( \frac{\partial x_a}{\partial z_b} \right ) \right )$. Using this, one can 
compute that 
\begin{align} \label{eq_2}
\frac{\partial}{\partial z_b} \otimes \langle dz_b , \tau \rangle
& = \frac{\partial}{\partial x_a} \otimes \langle dx_a , \tau \rangle - (-1)^{|x_a| + |x_a||z_b|} \langle \frac{\partial}{\partial x_a} \left ( \frac{\partial x_a}{\partial z_b}\right ) dz_b, \tau \rangle. 
\end{align}
Putting together equations \eqref{eq_1} and \eqref{eq_2} one finds
\begin{align}
 \delta_z (\omega \otimes F \otimes \tau ) &= (-1)^{|\tau|} \omega \otimes F \partial_{a} \otimes \langle dx_a, \tau \rangle -  (-1)^{|\tau |} (-1)^{|x_a| + |x_a||z_b|} \langle \frac{\partial}{\partial x_a} \left ( \frac{\partial x_a}{\partial z_b}\right ) dz_b, \tau \rangle 
 \nonumber \\
 & - (-1)^{|\tau|}\omega \otimes F \otimes \mathfrak{e}_{x} (\tau) + (-1)^{|\tau|} (-1)^{|x_a| + |x_a||z_b|} \langle \frac{\partial}{\partial x_a} \left ( \frac{\partial x_a}{\partial z_b}\right ) dz_b, \tau \rangle \nonumber\\
 & =  \delta_x (\omega \otimes F \otimes \tau ),
\end{align} 
thus completing the proof of invariance.
\item We now prove the $\mathcal{O}_\mani$-definedness. We once again adopt Einstein convention on repeated indices. On the one hand one has
\begin{align}
\delta ( F f \otimes \tau ) & = (-1)^{|\tau|} \left ( F f \partial_{a} \otimes \langle dx_a , \tau \rangle -  F f \otimes \mathfrak{e}_x (\tau)\right ) \nonumber \\
& = (-1)^{|\tau|} \big ( (-1)^{|f||x_a|} F \partial_a \otimes f  \langle dx_a , \tau \rangle - (-1)^{|f| |x_a|} F \otimes \partial_a f \langle dx_a, \tau \rangle + \nonumber \\
& \quad -  F \otimes f \mathfrak{e}_x (\tau) \big ).
\end{align}
On the other hand, one computes
\begin{align}
\delta ( F \otimes f \tau ) & = (-1)^{|f| + |\tau|} \left (  F \partial_a \otimes \langle dx_a , f \tau \rangle  -  F \otimes \mathfrak{e}_x (f\tau) \right ) \nonumber \\
& = (-1)^{|\tau|} \big ( (-1)^{|f||x_a|}  F \partial_a \otimes f \langle dx_a , \tau \rangle - (-1)^{|f||x_a|}  F \otimes \partial_a f \langle dx_a, \tau \rangle + \nonumber \\
& \quad -  F \otimes f \mathfrak{e}_x (\tau ) \big ), 
\end{align}
where we have used the property \eqref{prop_e} above. 
\item We prove that $\delta^2= 0$. In particular, writing again $\delta = \delta_1 + \delta_2 $, posing $\delta_1(F\otimes \tau) \defeq (-1)^{|\tau|} F \sum_a\partial_{a} \otimes \langle dx_a, \tau \rangle $ 
and $\delta_2(F \otimes \tau) \defeq - (-1)^{|\tau|} F \otimes \mathfrak{e}_{x} (\tau)$, it is easy to see that both $\delta_1^2 = 0$ and $\delta^2_2 = 0.$ A direct computation shows that the 
commutator $[\delta_1, \delta_2] = \delta_1 \delta_2 + \delta_2 \delta_1$ vanishes as well, indeed 
\bear
\delta_1 \delta_2 (F \otimes \tau) = F \sum_a \partial_a \otimes \langle dx_a , \mathfrak{e}_x (\tau) \rangle = - \delta_2 \delta_1 (F \otimes \tau),
\eear
\end{enumerate} 
\end{proof}
\noindent The previous Lemma justifies the following definition.
\begin{definition}[Universal Spencer Complex of $\mani$] Let $\mani$ be a supermanifold. We call the pair $(\mathcal{D}_\mani \otimes_{\stsheaf} (\Omega_{\mani, odd}^\bullet)^\ast, \delta)$ the universal Spencer complex of $\mani.$
\end{definition}
\noindent We now compute the homology of the universal Spencer complex.
\begin{theorem}[Homology of Universal Spencer Complex] \label{USC} Let $\mani$ be a supermanifold and let $( \mathcal{D}_\mani \otimes_{\stsheaf} (\Omega^{\bullet}_{\mani, odd})^\ast, \delta)$ be the universal Spencer complex of $\mani$. There 
exists a canonical isomorphism of sheaves
\bear
H_\bullet ( \mathcal{D}_\mani \otimes_{\stsheaf} (\Omega^\bullet)^\ast_{\mani, odd}, \delta) \cong \stsheaf.
\eear
\end{theorem}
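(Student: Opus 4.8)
The plan is to mirror the proof of Theorem \ref{UniversalDR}: reduce to a chart-wise computation and exhibit a contracting homotopy, the homology being generated by the unique monomials on which the homotopy degenerates. Since $\delta$ is globally well-defined and $\mathcal{O}_\mani$-defined by Lemma \ref{propdelta}, and a homology sheaf is determined by its sections on a basis of coordinate charts, I fix a chart $(U,x_a)$ and work with monomials $F\otimes\tau$, where $F=\partial^J$ (times a function) trivializes $\mathcal{D}_\mani\lfloor_U$ and $\tau=\pi\partial^I$ trivializes $(\Omega^\bullet_{\mani,odd})^\ast\lfloor_U=S^\bullet\Pi\mathcal{T}_\mani\lfloor_U$. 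First I would pin down the degree-zero end: since $S^0\Pi\mathcal{T}_\mani=\stsheaf$, the bottom term is $\mathcal{D}_\mani$ itself, and a direct computation (using $\langle dx_a,\pi\partial_b\rangle=(-1)^{(|x_a|+1)(|x_b|+1)}\delta_{ab}$ and $\mathfrak{L}_{\partial_a}(\pi\partial_b)=\pi[\partial_a,\partial_b]=0$, whence $\mathfrak{e}_x(\pi\partial_b)=0$) gives $\delta(F\otimes\pi\partial_b)=F\partial_b\otimes 1$. Thus the image of $\delta$ into the bottom is the left ideal $\sum_b\mathcal{D}_\mani\partial_b$, and $H_0\cong\mathcal{D}_\mani/\sum_b\mathcal{D}_\mani\partial_b\cong\stsheaf$ via the global augmentation $P\mapsto P(1)$. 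This fixes both the answer and the fact that the surviving class is the $\stsheaf$-span of $1\otimes 1$.

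The mechanism killing the higher homology is cleanest on the associated graded. Filtering $\mathcal{D}_\mani$ by the order of differential operators, the term $\delta_1(F\otimes\tau)=(-1)^{|\tau|}\sum_aF\partial_a\otimes\langle dx_a,\tau\rangle$ raises order by one while $\delta_2(F\otimes\tau)=-(-1)^{|\tau|}F\otimes\mathfrak{e}_x(\tau)$ preserves it, so the symbol of $\delta$ is $\delta_1$ and the induced differential on $\mathrm{gr}^\bullet\mathcal{D}_\mani\otimes S^\bullet\Pi\mathcal{T}_\mani\cong S^\bullet\mathcal{T}_\mani\otimes_\stsheaf S^\bullet\Pi\mathcal{T}_\mani$ is the Koszul differential $\kappa=\sum_a(\cdot\,\partial_a)\otimes\langle dx_a,\,\cdot\,\rangle$ pairing each $\partial_a$ with the contraction against its dual. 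Factoring over coordinates, $\kappa$ is the tensor product of one-variable Koszul complexes — $\mathbb{C}[\partial_z]\otimes\Lambda[\pi\partial_z]$ in the even directions and $\Lambda[\partial_\theta]\otimes\mathbb{C}[\pi\partial_\theta]$ in the odd ones — each acyclic away from degree zero, so $\kappa$ resolves $\stsheaf$, the surviving class being $1\otimes 1$.

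Concretely I would lift this to an honest contracting homotopy, exactly as in Theorem \ref{UniversalDR}. Dualizing $\delta_1$, set
\begin{align}
K(F\otimes\tau)\defeq\sum_a(-1)^{\epsilon_a}[x_a,F]\otimes\pi\partial_a\,\tau,
\end{align}
which lowers the operator order by one (through $[x_a,\partial_b]=-\delta_{ab}$) and raises the polyvector degree by one, for suitable signs $\epsilon_a$. Computing the supercommutator $\{K,\delta\}$, the cross terms ($a\neq b$) should cancel in pairs and the diagonal terms combine into the Koszul number operator $N$ plus strictly order-lowering corrections from $\delta_2$; the leading operator $N$ acts on each monomial as the nonnegative integer $\deg_{S\mathcal{T}}(\mathrm{symb}\,F)+\deg(\tau)$, vanishing precisely when $F$ has order zero and $\tau$ has degree zero. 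Hence $\{K,\delta\}=N+(\text{lower order})$ is invertible on the complement of the line spanned by $1\otimes 1$ — on each order-bounded piece it is $N$ plus a nilpotent perturbation — so $K(N+\cdots)^{-1}$ is a contracting homotopy there, forcing the homology to be concentrated on $1\otimes 1$ and equal to $\stsheaf$, in agreement with $H_0$.

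The main obstacle, and the step demanding real care, is controlling the term $\delta_2$ built from $\mathfrak{e}_x$: unlike the de Rham operator in Theorem \ref{UniversalDR}, it is neither coordinate-invariant on its own nor a derivation (see \eqref{prop_e}), so $\{K,\delta\}$ will not be the clean scalar obtained there, but only the number operator up to order-lowering corrections. Verifying that these corrections are genuinely of lower order — so that $N+(\text{lower order})$ stays invertible off the bottom and the homotopy survives the passage from $\mathrm{gr}\,\mathcal{D}_\mani$ back to $\mathcal{D}_\mani$ — is where the bookkeeping of super-signs and the properties of $\mathfrak{L}_X$ from Lemma \ref{PropLie} must be deployed carefully. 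Equivalently, one may phrase the conclusion through the convergence of the order-filtration spectral sequence, whose $E^1$-page is the Koszul homology $\stsheaf$ concentrated in a single bidegree and therefore degenerates.
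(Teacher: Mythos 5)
Your proposal is correct and follows essentially the same route as the paper: the identical contracting homotopy $K(F\otimes\tau)=\pm\sum_a[F,x_a]\otimes\pi\partial_a\cdot\tau$, applied chart-wise to normal-ordered monomials $F_j\otimes\partial^I$ after invoking the $\mathcal{O}_\mani$-definedness of both $K$ and $\delta$, with the homotopy degenerating exactly when $\deg F_j=\deg\partial^I=0$. The only divergence is that your concern about $\delta_2$-corrections is unnecessary: once the function is absorbed into the $\mathcal{D}_\mani$ factor the polyfield $\partial^I$ has constant coefficients, so $\mathfrak{e}_x(\partial^I)=0$ and $K\delta+\delta K$ is \emph{exactly} the scalar $\deg(F_j)+\deg(\partial^I)$ with no lower-order perturbation, making the $N$-plus-nilpotent (or filtration spectral sequence) packaging dispensable.
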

\begin{proof} We construct a homotopy for $\delta$. In particular, we claim that the homotopy is given by
\bear
K (F \otimes \tau) = (-1)^{|\tau|} \sum_a \omega \otimes [F, x_a] \otimes \pi \partial_{a} \cdot \tau
\eear
for $F \in \mathcal{D}_\mani$ and $\tau \in (\Omega^\bullet_{\mani, odd})^\ast.$ First we show that it is $\mathcal{O}_\mani$-defined, indeed one has
\begin{align}
K ( F \otimes f \tau) & = (-1)^{|\tau| + |f|} \sum_a [F, x_a] \otimes \pi \partial_{a} \cdot f \tau \nonumber \\
& =  (-1)^{|\tau| + |f|}  \sum_a (-1)^{|f||x_a| + |f| + |f||x_a|} [Ff, x_a]  \otimes \pi \partial_{a} \cdot \tau \nonumber \\
& = K ( F f \otimes \tau) \label{odefK}
\end{align}
for any $f \in \mathcal{O}_\mani.$ Now, we observe that in general an element of the form $F f \in \mathcal{D}_\mani$ is not homogeneous and, as such, it does not have a well-defined degree. On the other hand, one has that $F f = \sum_j F_j$ with 
$F_j $ homogeneous, so without loss of generality we restrict to elements having a well-defined degree inside $\mathcal{D}_\mani$. Relying on these considerations, we work locally, putting $\tau = f \partial^I$ for some multi-index 
$I$. Using \eqref{odefK} we can write
\bear
K ( F \otimes f \partial^I) = K ( F f \otimes \partial^I) = \sum_{j} K ( F_{j} \otimes \partial^I) 
\eear
and we consider a single term of the sum above. Again, since also $\delta$ is $\mathcal{O}_\mani$-defined, one easily verifies that
\begin{align}
(K \delta + \delta K) ( F_j \otimes \partial^I) & = \sum_a (-1)^{|x_a| + 1}  F_j \otimes \pi \partial_a \langle dx_a, \partial^I \rangle + \sum_a [F_j , x_a] \partial_a \otimes \partial^{I} \nonumber \\
& = (\deg (F_j) + \deg (\partial^J)) ( \omega \otimes F_j \otimes \partial^J).
\end{align}
The homotopy fails in the case $\deg (F_j) + \deg (\partial^J) = 0$, \emph{i.e.}\ when $F$ and $\tau$ are sections of the structure sheaf $\mathcal{O}_\mani,$ which completes the proof.\end{proof}

\section{The de Rham/Spencer Double Complex of a Supermanifold}

\noindent We now aim at getting the previous two sections on the universal de Rham and Spencer complexes together, so that these fit in a unified framework. We start introducing the following definition, which relates the universal de Rham and Spencer complexes. 

\begin{definition}[Sheaf of Virtual Superforms] \label{virtualforms} Let $\mani$ be a supermanifold, we call $$\Omega^\bullet_{\mani, odd} \otimes_{\mathcal{O}_\mani} \mathcal{D}_\mani \otimes_{\mathcal{O}_\mani} (\Omega^\bullet_{\mani, odd})^\ast$$ 
the sheaf of virtual superforms, or virtual forms for short.
\end{definition}
\noindent It is easy to see that the differential $D$ and $\delta$ of the universal de Rham complex and of the universal Spencer complex can be lifted to the whole sheaf of virtual superforms simply by suitably tensoring them by the identity, in particular we will consider 
$D \otimes 1$ and $1 \otimes \delta$. These nilpotent operators commute with each other, as the following Corollary shows.
\begin{corollary} Let $\mani$ be a supermanifold and let $D\otimes 1$ and $1 \otimes \delta$ act on the sheaf of virtual superforms as defined in \ref{virtualforms}. Then $D\otimes 1$ and $1 \otimes \delta$ commute with each other, \emph{i.e.} 
\bear
[1 \otimes \delta, D \otimes 1 ] \defeq (1\otimes \delta) \circ (D \otimes 1) - (D \otimes 1)\circ (1 \otimes \delta ) = 0.
\eear
\end{corollary}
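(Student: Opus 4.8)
The plan is to read off the two lifted differentials as acting on only partially overlapping tensor slots of the triple product, and to reduce the vanishing of the commutator to the associativity of $\mathcal{D}_\mani$. The operator $D \otimes 1$ touches only the first two factors $\Omega^\bullet_{\mani, odd} \otimes_{\stsheaf} \mathcal{D}_\mani$, while $1 \otimes \delta$ touches only the last two factors $\mathcal{D}_\mani \otimes_{\stsheaf} (\Omega^\bullet_{\mani, odd})^\ast$; they overlap solely in the central sheaf $\mathcal{D}_\mani$, and there $D$ multiplies from the \emph{left} by $\partial_a$ while $\delta$ multiplies from the \emph{right} by $\partial_a$. Because each differential is tensored with the (even) identity on the slot it does not see, the lifts carry no additional Koszul sign, so it is enough to check the identity on homogeneous generators $\omega \otimes F \otimes \tau$.

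First I would invoke the decompositions already used in Lemmas \ref{propD} and \ref{propdelta}: write $D = D_1 + D_2$, with $D_1 = d \otimes 1$ acting only on $\omega$ and $D_2 = \sum_a (dx_a\,\cdot)\otimes(\partial_a\,\cdot)$ wedging $dx_a$ onto $\omega$ and left-multiplying $F$ by $\partial_a$; and $\delta = \delta_1 + \delta_2$, with $\delta_1(F \otimes \tau) = (-1)^{|\tau|}\sum_a F\partial_a \otimes \langle dx_a, \tau\rangle$ right-multiplying $F$ by $\partial_a$ and contracting $\tau$, and $\delta_2(F \otimes \tau) = -(-1)^{|\tau|}F \otimes \mathfrak{e}_x(\tau)$ acting only on $\tau$. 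Expanding $[1 \otimes \delta, D \otimes 1]$ yields four pieces $[D_i, \delta_j]$, of which three vanish almost for free. Indeed the pairs $(D_1, \delta_1)$, $(D_1, \delta_2)$ and $(D_2, \delta_2)$ each consist of operators acting on disjoint tensor slots (slot $1$ against slots $\{2,3\}$, slot $1$ against slot $3$, and slots $\{1,2\}$ against slot $3$ respectively); the only thing to verify is that their parity prefactors agree in the two orderings, which is immediate since $D$ reads off only $|\omega|$ and $|x_a|$ while $\delta$ reads off only $|\tau|$, and neither operator changes the parity the other one uses.

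The single substantive contribution is $[D_2, \delta_1]$, where the noncommutativity of $\mathcal{D}_\mani$ is used. Evaluating both orderings on a generator, each yields the double sum whose general term is
\[
(-1)^{|\omega||x_b| + |\tau|}\; dx_b\,\omega \otimes \partial_b F \partial_a \otimes \langle dx_a, \tau\rangle ,
\]
the middle factor arising as $(\partial_b F)\partial_a$ from $\delta_1 \circ D_2$ and as $\partial_b(F \partial_a)$ from $D_2 \circ \delta_1$. These coincide by associativity of $\mathcal{D}_\mani$, i.e.\ because left multiplication by $\partial_b$ commutes with right multiplication by $\partial_a$; the prefactors match because $D_2$ reads the parity of the untouched $\omega$ and $\delta_1$ that of the untouched $\tau$, and the two indices $a$ (contraction) and $b$ (wedging) range independently. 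Hence $[D_2, \delta_1] = 0$ too, and the four pieces sum to zero.

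I expect this last term to be the only real obstacle: one must carefully disentangle the simultaneous left action of $D_2$ and right action of $\delta_1$ on the same noncommutative factor and confirm that associativity is precisely what makes the two orderings produce \emph{equal}, rather than merely sign-conjugate, contributions. Everything else is disjoint-slot bookkeeping, so once this cancellation is verified the commutator vanishes and the sheaf of virtual superforms acquires the announced double complex structure.
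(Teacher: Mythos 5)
Your proposal is correct and is in substance the same argument as the paper's: the paper simply expands $(1\otimes\delta)\circ(D\otimes 1)$ on a homogeneous generator $\omega\otimes F\otimes\tau$ into the four terms you list and observes that it coincides with the opposite composition, the only nontrivial coincidence being $(\partial_a F)\partial_b=\partial_a(F\partial_b)$ together with the fact that the prefactor $(-1)^{|\tau|}$ of $\delta$ is unaffected by $D$ and the prefactor $(-1)^{|\omega||x_a|}$ of $D$ is unaffected by $\delta$. Your organization into the four commutators $[D_i,\delta_j]$ is a slightly more systematic bookkeeping of the identical computation.
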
 \label{corcomm}
\begin{proof} Let $\omega \otimes F \otimes \tau$ be a virtual superform. Then one easily verifies that
\begin{align}
(1 \otimes \delta) \circ (D \otimes 1) (\omega \otimes F \otimes \tau) & = (-1)^{|\tau|} \big ( d\omega \otimes F \partial_a \otimes \langle dx_a , \tau \rangle - d\omega \otimes F \otimes \mathfrak{e} (\tau) + \nonumber \\
& \quad + (-1)^{|x_a||\omega| } dx_a \omega \otimes \partial_a F \partial_b \otimes \langle  dx_b, \tau \rangle  - dx_a \omega \otimes \partial_a F \otimes \mathfrak{e} (\tau) \big ) \nonumber  \\
& = (D\otimes 1) \circ (1 \otimes \delta )(\omega \otimes F \otimes \tau)
\end{align} 
where we have adopted Einstein convention on repeated indices.  
\end{proof}
\noindent Now, for the sake of readability and convenience, we redefine these differentials as to get the following.
\begin{definition}[The Operators $\hat d$ and $\hat \delta$] Let $\omega \otimes F \otimes \tau \in \Omega^\bullet_{\mani, odd} \otimes_{\stsheaf} \mathcal{D}_\mani \otimes_{\stsheaf} (\Omega^\bullet_{\mani, odd})^\ast$ be a virtual superform. We define 
the operators $\hat d$ and $\hat \delta$ as 
\bear
\xymatrix@R=1.5pt{
\hat d : \Omega^\bullet_{\mani, odd} \otimes_{\stsheaf} \mathcal{D}_\mani \otimes_{\stsheaf} (\Omega^\bullet_{\mani, odd})^\ast \ar[r] & \Omega^\bullet_{\mani, odd} \otimes_{\stsheaf} \mathcal{D}_\mani \otimes_{\stsheaf} (\Omega^\bullet_{\mani, odd})^\ast  \\
\omega \otimes F \otimes \tau \ar@{|->}[r] & \hat d (\omega \otimes F \otimes \tau) \defeq (D \otimes 1 ) (\omega \otimes F \otimes \tau),
}
\eear
\bear
\xymatrix@R=1.5pt{
\hat \delta : \Omega^\bullet_{\mani, odd} \otimes_{\stsheaf} \mathcal{D}_\mani \otimes_{\stsheaf} \Omega^\bullet_{\mani, odd} \ar[r] & \Omega^\bullet_{\mani, odd} \otimes_{\stsheaf} \mathcal{D}_\mani \otimes_{\stsheaf} (\Omega^\bullet_{\mani, odd})^\ast  \\
\omega \otimes F \otimes \tau \ar@{|->}[r] & \hat \delta (\omega \otimes F \otimes \tau) \defeq (-1)^{|\omega| +| F| + |\tau|}(1 \otimes \delta ) (\omega \otimes F \otimes \tau).
}
\eear
\end{definition}
\noindent With these definitions one has the following obvious Theorem.
\begin{theorem} \label{doublecomplex} The triple $(\Omega^\bullet_{\mani, odd} \otimes_{\stsheaf} \mathcal{D}_\mani \otimes_{\stsheaf} (\Omega^\bullet_{\mani, odd})^\ast, \hat d, \hat \delta)$ defines a double complex with total differential given by the sum 
$\mathfrak{D} \defeq \hat d + \hat \delta.$
\end{theorem}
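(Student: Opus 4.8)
The plan is to reduce the statement entirely to three facts that have already been established: the nilpotency of $D$ (Lemma \ref{propD}(3)), the nilpotency of $\delta$ (Lemma \ref{propdelta}(3)), and the commutation of the two lifted operators $D\otimes 1$ and $1\otimes\delta$ proved in the preceding Corollary. Since the total differential is declared to be the plain sum $\mathfrak{D}\defeq\hat d+\hat\delta$, asserting that we have a double complex is asking for $\mathfrak{D}^2=0$, which, upon expanding, is equivalent to the three conditions
\[
\hat d^2=0,\qquad \hat\delta^2=0,\qquad \hat d\hat\delta+\hat\delta\hat d=0,
\]
the last being an \emph{anti}-commutation. So the theorem is indeed formal, and the only real content is checking that the sign inserted in the definition of $\hat\delta$ turns the \emph{commuting} pair of the Corollary into the \emph{anticommuting} pair required here.

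First I would dispose of the two squares. Because $\hat d=D\otimes 1$, one gets $\hat d^2=D^2\otimes 1=0$ at once from Lemma \ref{propD}(3). For $\hat\delta$ the sign must be tracked: writing $t(\omega\otimes F\otimes\tau)\defeq|\omega|+|F|+|\tau|$ for the total parity, we have $\hat\delta=(-1)^{t}(1\otimes\delta)$, and since $1\otimes\delta$ shifts $t$ by the parity $|\delta|$ of $\delta$, applying $\hat\delta$ twice reinstates the sign and yields $\hat\delta^2=(-1)^{|\delta|}(1\otimes\delta)^2=(-1)^{|\delta|}\,(1\otimes\delta^2)=0$ by Lemma \ref{propdelta}(3). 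Hence both squares vanish regardless of the chosen sign.

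The heart of the matter — and precisely the reason the factor $(-1)^{t}$ is put into the definition of $\hat\delta$ — is the anticommutation, and here I would use that $D$, and therefore $D\otimes 1$, is an \emph{odd} operator (it is built from the odd differential $d$ together with the odd terms $dx_a\otimes\partial_{x_a}$). Consequently, when the Koszul sign $(-1)^{t}$ is commuted past $\hat d$ it acquires an extra factor $-1$. Expanding both compositions on a homogeneous $\omega\otimes F\otimes\tau$ and pulling the signs to the front gives
\[
\hat d\hat\delta+\hat\delta\hat d=(-1)^{t}\big[(D\otimes 1)(1\otimes\delta)-(1\otimes\delta)(D\otimes 1)\big]=(-1)^{t}\,[D\otimes 1,\,1\otimes\delta],
\]
which vanishes exactly by the commutation proved in the Corollary. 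Combining the three identities yields $\mathfrak{D}^2=\hat d^2+(\hat d\hat\delta+\hat\delta\hat d)+\hat\delta^2=0$. I expect this sign bookkeeping to be the only genuine obstacle: all the substantive geometric input (invariance, $\mathcal{O}_\mani$-definedness, nilpotency of $D$ and $\delta$, and the commuting of their lifts) has already been extracted, so what remains is solely to confirm that the odd parity of $D$ converts the commuting pair $(D\otimes 1,\,1\otimes\delta)$ into the anticommuting pair $(\hat d,\hat\delta)$ demanded by the double-complex structure.
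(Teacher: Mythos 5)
Your proposal is correct and follows essentially the same route as the paper's proof: both reduce $\mathfrak{D}^2=0$ to the nilpotency of $D$ and $\delta$ together with the commutation of $D\otimes 1$ and $1\otimes\delta$ from the preceding Corollary, with the Koszul sign in the definition of $\hat\delta$ (combined with the oddness of $D$) converting the commutator into the required anticommutator. Your write-up is in fact slightly more explicit than the paper's about why that sign conversion works.
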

\begin{proof}
It is enough to observe that for a section $\eta \in \Omega^\bullet_{\mani, odd} \otimes_{\stsheaf} \mathcal{D}_\mani \otimes_{\stsheaf} (\Omega^\bullet_{\mani, odd})^\ast $ one has that 
\begin{align}
\mathfrak{D}^2 (\eta ) & = ( \hat d^2 + \hat \delta^2 + \hat d \hat \delta + \hat \delta \hat d )(\eta) \nonumber \\
& = (-1)^{|\eta|} \big( (D \otimes 1)\left ( 1 \otimes \delta  \right ) - (1 \otimes \delta) \left ( D \otimes 1\right ) \big )(\eta) \nonumber \\
& = (-1)^{|\eta|} [D\otimes 1,  1 \otimes  \delta ] (\eta) = 0,
\end{align}
thanks to the Corollary \ref{corcomm} and to the fact that $D$ and $\delta$ are nilpotent.
\end{proof}
\noindent The previous Theorem allows us to give the following Definition
\begin{definition}[de Rham/Spencer Double Complex of a Supermanifold] Let $\mani$ be a supermanifold. We call the double complex of sheaves 
$_\mathpzc{D}\mathcal{V}_{\mani}^{\bullet \bullet} \defeq (\Omega^\bullet_{\mani, odd} \otimes_{\stsheaf} \mathcal{D}_\mani \otimes_{\stsheaf} (\Omega^\bullet_{\mani, odd})^\ast, \hat d, \hat \delta)$ the {de Rham/Spencer double complex} or also the virtual superforms double complex.
We define the bi-degrees of the double complex so that the differential $\hat d$ moves {vertically} and $\hat \delta$ moves {horizontally}.  
\end{definition}
{\remark This can be visualized as a {second quadrant} double complex, as $\hat \delta$ lowers the degree in $(\Omega^\bullet_{\mani, odd})^\ast$ by one.}
{\remark Actually, we not only have a double complex but a \emph{triple complex} instead, by taking the \v{C}ech cochains of the above double complex of virtual superforms. Obviously, given any sheaf $\mathcal{F}$ and any open cover 
$\mathcal{U} = \{U\}_{i\in I}$ of $\mani$, the \v{C}ech differential $\check{\delta} : \check{C}^\bullet (\mathcal{U}, \mathcal{F}) \rightarrow \check{C}^{\bullet +1} (\mathcal{U}, \mathcal{F})$ is independent from $\hat D$ and $\hat \delta$, and therefore 
it commutes with both of them, justifying the following Definition.}
\begin{definition}[\v{C}ech-Virtual Superforms Triple Complex] \label{triplecompl} Let $\mani$ be a supermanifold and $\mathcal{U}$ an open cover of $\mani$. We call the triple complex 
$_\mathpzc{T}\mathcal{V}^{\bullet \bullet \bullet}_\mani \defeq(\check{C}^\bullet (\mathcal{U}, \Omega^\bullet_{\mani, odd} \otimes_{\stsheaf} \mathcal{D}_\mani \otimes_{\stsheaf} (\Omega^\bullet_{\mani, odd})^\ast), \check{\delta}, \hat{d}, \hat{\delta})$ the 
\v{C}ech-virtual superforms triple complex or \v{C}ech-virtual superforms complex for short. 
\end{definition} 
\noindent We now study these double and triple complexes. Clearly, to any double complex are attached two spectral sequences.
\begin{definition}[Spectral Sequences $E_r^\Omega$ and $E_r^{\Sigma}$] Let $_\mathpzc{D}\mathcal{V}^{\bullet \bullet}_\mani$ be the virtual superform double complex of $\mani$. We call  
\begin{enumerate}
\item $(E_r^{\Omega}, d^{\Omega}_r)$ the spectral sequence of the virtual superforms double complex with respect to its vertical filtration, \emph{i.e.}\ by first computing homology with respect to the differential $\hat{\delta}$;
\item $(E^\Sigma_r, d^{\Sigma}_r)$ the spectral sequence of the double complex of virtual superforms with respect to its horizontal filtration, \emph{i.e.}\ by first computing homology with respect to the differential $\hat{d}$.
\end{enumerate}
 \end{definition}
\noindent Now, using ordinary spectral sequences machinery, we extract information from the double and triple complex. In particular, we start looking at $E^\Omega_r$: in the next Theorem we show that differential forms arise {at page one}. 
\begin{theorem}[Differential Forms from $_\mathpzc{D}\mathcal{V}_\mani^{\bullet \bullet}$] \label{diffform} Let $\mani$ be a supermanifold and let $(E_r^{\Omega}, d^\Omega_r)$ be the spectral sequence of the double complex 
$_\mathpzc{D}\mathcal{V}^{\bullet \bullet}_\mani$ defined as above. Then \begin{enumerate}
\item
$
E_1^{\Omega} \cong \Omega^{\bullet}_{\mani, odd};$
\item provided that $\mani$ is a real or complex supermanifold, $E_2^{\Omega} = E^\Omega_{\infty} \cong \mathbb{K}_\mani,$
\end{enumerate}
where $\mathbb{K}_{\mani}$ is the constant sheaf valued in the field $ \mathbb{R}$ or $\mathbb{C}$ depending on $\mani$ being real or complex. 
\end{theorem}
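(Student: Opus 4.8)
The plan is to run the $E^\Omega$ spectral sequence page by page, feeding the homology of the universal Spencer complex (Theorem \ref{USC}) into the first page and the Poincar\'e lemma for differential forms into the second. For part (1), recall that $E^\Omega$ is obtained by first taking homology with respect to $\hat\delta = \pm(1\otimes\delta)$, which acts only on the last two tensor factors $\mathcal{D}_\mani \otimes_{\stsheaf} (\Omega^\bullet_{\mani,odd})^\ast$ through the Spencer differential $\delta$. I would therefore lift the Spencer homotopy $K$ from the proof of Theorem \ref{USC} to the operator $1\otimes K$ on the triple tensor product. The one point needing care is that $1\otimes K$ must descend over $\stsheaf$: this works because $K$ is left $\stsheaf$-linear in its $\mathcal{D}_\mani$-slot, since $[gF,x_a]=g[F,x_a]$ for functions $g$ (functions super-commute). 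The homotopy identity $(K\delta+\delta K)=(\deg F+\deg\tau)\cdot\mathrm{id}$ then forces the $\hat\delta$-homology to be concentrated in the single column where $F,\tau\in\stsheaf$, yielding $E_1^\Omega \cong \Omega^\bullet_{\mani,odd}\otimes_{\stsheaf}\stsheaf\otimes_{\stsheaf}\stsheaf \cong \Omega^\bullet_{\mani,odd}$, with classes represented by $[\omega\otimes 1\otimes 1]$.

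Next I would identify the induced differential $d_1^\Omega$ with the de Rham differential $d$. Applying $\hat d = D\otimes 1$ to a representative $\omega\otimes 1\otimes 1$ gives $d\omega\otimes 1\otimes 1 + \sum_a (-1)^{|\omega||x_a|}\, dx_a\,\omega\otimes\partial_a\otimes 1$. The cross term must be shown to vanish in $E_1^\Omega$, i.e.\ to be $\hat\delta$-exact. Indeed $\partial_a\otimes 1$ is a $\delta$-cycle, because $\langle dx_b,1\rangle=0$ and $\mathfrak{e}_x(1)=\langle dx_a,\mathfrak{L}_{\partial_a}(1)\rangle=0$; its Spencer degree is $\deg(\partial_a)+\deg(1)=1\neq 0$, so by the homotopy identity of Theorem \ref{USC} it equals $\delta K(\partial_a\otimes 1)$ and is a boundary. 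Hence $d_1^\Omega[\omega\otimes 1\otimes 1]=[d\omega\otimes 1\otimes 1]$, and $(E_1^\Omega,d_1^\Omega)\cong(\Omega^\bullet_{\mani,odd},d)$ is precisely the de Rham complex of $\mani$.

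For part (2), $E_2^\Omega$ is then the cohomology of the de Rham complex $(\Omega^\bullet_{\mani,odd},d)$. When $\mani$ is real or complex analytic, the Poincar\'e lemma for differential superforms — the straightforward super-extension of the classical statement — says this complex is a resolution of the locally constant sheaf $\mathbb{K}_\mani$, so $E_2^\Omega\cong\mathbb{K}_\mani$, supported in bidegree $(0,0)$. Because $E_2^\Omega$ is concentrated in a single bidegree, every higher differential $d_r^\Omega$ for $r\geq 2$ has zero source or target, the sequence degenerates, and $E_2^\Omega = E_\infty^\Omega\cong\mathbb{K}_\mani$.

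The main obstacle I expect is the correct bookkeeping in the second step: showing that the induced $d_1^\Omega$ is genuinely the de Rham $d$ and that the spurious $\partial_a$-term dies in $E_1^\Omega$, which is exactly where the homotopy of Theorem \ref{USC} does the real work; the two page computations themselves are then immediate. I would also flag that the restriction to the real or complex case in part (2) is essential, since it is only there that the (differential-form) Poincar\'e lemma provides a resolution of $\mathbb{K}_\mani$, whereas it fails in the algebraic category.
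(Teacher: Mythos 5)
Your proposal is correct and follows essentially the same route as the paper: page one is the $\hat\delta$-homology computed via Theorem \ref{USC}, and page two is the de Rham cohomology handled by the super Poincar\'e lemma, with degeneration following from concentration in a single bidegree. The only difference is that you explicitly verify two points the paper leaves implicit --- that the Spencer homotopy $K$ lifts to $1\otimes K$ on the triple tensor product, and that the induced differential $d_1^{\Omega}$ on representatives $[\omega\otimes 1\otimes 1]$ is the de Rham $d$ because the cross term $\sum_a (-1)^{|\omega||x_a|} dx_a\,\omega\otimes\partial_a\otimes 1$ is $\hat\delta$-exact --- and both checks are sound.
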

\begin{proof} Everything follows easily from previous results. We prove separately the assertions.
\begin{enumerate}
\item By definition $E_1^{\Omega} = H_{\hat \delta} (_\mathpzc{D} \mathcal{V}^{\bullet \bullet}_\mani)$, so it follows from Theorem \ref{USC} that $E_1^{\Omega} \cong \Omega^\bullet_{\mani, odd}$. 
\item Once again, by definition $E_2^\Omega = H_{\hat d} H_{\hat \delta} (_\mathpzc{D} \mathcal{V}^{\bullet \bullet}_\mani)$, \emph{i.e.}\ the cohomology of the de Rham complex $(\Omega^\bullet_\mani, d)$. By the generalization of the Poincar\'e Lemma 
for supermanifolds, see for example \cite{Deligne} or \cite{Manin}, this is isomorphic to $\mathbb{K}_\mathbb{\mani}$. Also, the homology is concentrated in degree zero, so the spectral sequence converges at page two and 
$E_\infty^{\Omega} = E^{\Omega}_2$. 
\end{enumerate}
These conclude the proof.
\end{proof}
\noindent Likewise, we can find integral forms from page one of the other spectral sequence $E^\Sigma_r$, as shown in the following Theorem, mirroring the previous one for $E^\Omega_r$.
\begin{theorem}[Integral Forms from $_\mathpzc{D}\mathcal{V}_\mani^{\bullet \bullet}$] \label{intform} Let $\mani$ be a supermanifold and let $(E_r^{\Sigma}, d^\Sigma_r)$ be the spectral sequence of the double complex 
$_\mathpzc{D}\mathcal{V}^{\bullet \bullet}_\mani$ defined as above. Then \begin{enumerate}
\item
$
E_1^{\Sigma} \cong \mathcal{B}er(\mani)\otimes_{\stsheaf} (\Omega^\bullet_{\mani, odd})^\ast;$
\item provided that $\mani$ is a real or complex supermanifold, $E_2^{\Sigma} = E^\Sigma_{\infty} \cong \mathbb{K}_\mani,$
\end{enumerate}
where $\mathbb{K}_{\mani}$ is the constant sheaf valued in the field $ \mathbb{R}$ or $\mathbb{C}$ depending on $\mani$ being real or complex. 
\end{theorem}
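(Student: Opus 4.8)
The plan is to mirror the proof given for $E^\Omega_r$, interchanging the roles of $\hat d$ and $\hat \delta$ and replacing the ordinary Poincaré lemma by its integral-forms counterpart.

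\emph{Part $(1)$.} By construction $E_1^\Sigma = H_{\hat d}({}_{\mathpzc{D}}\mathcal{V}^{\bullet\bullet}_\mani)$, the homology taken first with respect to the vertical differential $\hat d = D \otimes 1$. I would regard the triple product as $(\Omega^\bullet_{\mani, odd} \otimes_{\stsheaf} \mathcal{D}_\mani) \otimes_{\stsheaf} (\Omega^\bullet_{\mani, odd})^\ast$, on which $\hat d$ acts as $D$ on the first factor and as the identity on the second. Since $D$ is right $\stsheaf$-linear for the action induced by $\mathcal{D}_\mani$, and since $(\Omega^\bullet_{\mani, odd})^\ast = S^\bullet \Pi\mathcal{T}_\mani$ is locally free, hence flat, over $\stsheaf$, taking homology commutes with $- \otimes_{\stsheaf} (\Omega^\bullet_{\mani, odd})^\ast$. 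Theorem \ref{UniversalDR} then yields
\begin{align}
E_1^\Sigma \cong H_\bullet(\Omega^\bullet_{\mani, odd} \otimes_{\stsheaf} \mathcal{D}_\mani, D) \otimes_{\stsheaf} (\Omega^\bullet_{\mani, odd})^\ast \cong \mathcal{B}er(\mani) \otimes_{\stsheaf} (\Omega^\bullet_{\mani, odd})^\ast,
\end{align}
which is precisely the sheaf of integral forms.

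\emph{Part $(2)$.} The page-two term is $E_2^\Sigma = H_{\hat\delta}(E_1^\Sigma)$, so the first task is to identify the induced differential $d_1^\Sigma$ on $E_1^\Sigma$ with the integral-forms differential $\delta$ of \eqref{deltaint}. I would evaluate $\hat\delta$ on a representative $\varphi(x) f \otimes \tau$, where $\varphi(x) = [dz_1 \ldots dz_p \otimes \partial_{\theta_1} \ldots \partial_{\theta_q}]$ generates $\mathcal{B}er(\mani)$, and then project onto $\hat d$-homology. In the operator $\delta$ the first summand right-multiplies the $\mathcal{D}_\mani$-factor by $\partial_a$; under the isomorphism of Theorem \ref{UniversalDR} this descends to the right $\mathcal{D}_\mani$-action on $\mathcal{B}er(\mani)$, i.e.\ to $-\mathcal{L}_{\partial_a}$, by the Corollary identifying that action with the Lie derivative. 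Combined with the contraction term $\mathfrak{e}_x$, this recovers exactly \eqref{deltaint}, so that $(E_1^\Sigma, d_1^\Sigma)$ is the complex of integral forms $(\mathcal{B}er(\mani) \otimes_{\stsheaf} S^\bullet \Pi\mathcal{T}_\mani, \delta)$.

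It then remains to compute its homology. By the Poincaré lemma for integral forms — the genuinely supergeometric analogue of the ordinary Poincaré lemma used in the $E^\Omega$ case — this homology is isomorphic to $\mathbb{K}_\mani$ and is concentrated in a single degree, so, exactly as in the differential-forms case, the spectral sequence converges at page two and $E_2^\Sigma = E_\infty^\Sigma \cong \mathbb{K}_\mani$. The routine ingredients here are the flatness step of $(1)$ and the degeneration-by-concentration argument; the main obstacle is the identification of $d_1^\Sigma$ with $\delta$, which requires tracking how right-multiplication by $\partial_a$ becomes the Lie derivative on $\mathcal{B}er(\mani)$ after passing to $\hat d$-homology while keeping the Koszul signs under control — precisely the subtlety that makes the integral-forms differential delicate to define invariantly — together with the Poincaré lemma for integral forms, where the specifically supergeometric difficulty is concentrated.
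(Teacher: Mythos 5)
Your proposal is correct and follows essentially the same route as the paper: part $(1)$ is read off from Theorem \ref{UniversalDR}, and part $(2)$ identifies $E_2^{\Sigma}$ with the homology of the integral-forms complex and concludes by the Poincar\'e Lemma for integral forms (Theorem \ref{PLInt}). The only difference is that you make explicit two steps the paper leaves implicit — the flatness of $(\Omega^\bullet_{\mani, odd})^\ast$ used to commute homology with the tensor product, and the identification of the induced differential $d_1^{\Sigma}$ with $\delta$ of \eqref{deltaint} via the right $\mathcal{D}_\mani$-module structure on $\mathcal{B}er(\mani)$ — both of which are accurate and consistent with the paper's Corollary on the Lie derivative.
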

\begin{proof} Just like above, we prove separately the statements.
\begin{enumerate}
\item By definition $E_1^{\Sigma} = H_{\hat d} (_\mathpzc{D} \mathcal{V}^{\bullet \bullet}_\mani)$, so it follows from Theorem \ref{UniversalDR} that $E_1^{\Sigma} \cong \mathcal{B}er (\mani) \otimes_{\stsheaf} (\Omega^\bullet_{\mani, odd})^\ast$. 
\item Again, by definition $E_2^\Sigma = H_{\hat \delta} H_{\hat d} (_\mathpzc{D} \mathcal{V}^{\bullet \bullet}_\mani)$, \emph{i.e.}\ the cohomology of the complex of integral forms $(\Sigma^\bullet_\mani, \delta)$. There is an analogous Poincar\'e Lemma 
for integral forms on supermanifolds (see Theorem 3 in chapter 4, paragraph 8 of \cite{Manin} for the statement, or Theorem \ref{PLInt} next in this section). Once again, this guarantees that, by assigning the degree as explained early on after equation 
\eqref{deltaint}, the homology is isomorphic to $\mathbb{K}_\mathbb{\mani}$ and concentrated in a single degree, so that the spectral sequence converges at page two and $E_\infty^{\Sigma} = E^{\Sigma}_2$. 
\end{enumerate}
These conclude the proof.
\end{proof}
\noindent Finally, using the \v{C}ech-virtual superforms complex, one can prove that differential forms and integral forms compute exactly the same topological invariants related to $\mani$, namely the (co)homology 
$\check{H}^\bullet (\mani, \mathbb{K}_\mani),$ which is actually the cohomology of the total complex. \\
For convenience, for a \emph{real} supermanifold we define
\bear 
H^\bullet_{\mathpzc{dR}} (\mani) \defeq H_d (\Omega^\bullet_{\mani, odd} (\mani)) \quad \mbox{ and }\quad H^\bullet_{\mathpzc{Sp}} (\mani) \defeq H_{\delta} (\Sigma_\mani^\bullet (\mani) ),
\eear 
where $\Omega^{\bullet}_{\mani, odd} (\mani)$ are the global sections of $\Omega^{\bullet}_{\mani,odd}.$ The following Theorem provides the analogous of the 
\emph{\v{C}ech-de Rham isomorphism} in the context of \emph{real} supermanifolds and proves the coincidence of the cohomologies of differential and integral forms (for a categorial construction, see Proposition 1.6.1 and the subsequent remark 
in \cite{Penkov}).  
\begin{theorem}[Equivalence of Cohomology of Differential and Integral Forms] \label{Cechde} Let $\mani$ be a real supermanifold. The cohomology of differential forms $H^\bullet_{\mathpzc{dR}} (\mani)$ and the cohomology of integral forms 
$H^\bullet_{\mathpzc{Sp}} (\mani) $ are isomorphic. In particular, one has
\bear
H^\bullet_{\mathpzc{dR}} (\mani) \cong \check{H}^\bullet (\mani, \mathbb{R}_\mani) \cong H^\bullet_{\mathpzc{Sp}} (\mani).
\eear 
\end{theorem}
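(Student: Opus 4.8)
The strategy is to exploit the three pairwise-commuting differentials of the \v{C}ech-virtual superforms triple complex $_\mathpzc{T}\mathcal{V}^{\bullet\bullet\bullet}_\mani$ by collapsing one of the two superform directions and recognising the resulting double complex as either the \v{C}ech-de Rham or the \v{C}ech-Spencer double complex, each of which is then handled by the classical two-filtration argument. Two geometric inputs make this work, and both are genuinely used here: since $\mani$ is \emph{real}, $\stsheaf$ admits partitions of unity, so all the locally free $\stsheaf$-modules in sight — and their relevant cohomology sheaves — are fine, hence $\Gamma$-acyclic; and the underlying space, being that of $\manir$, carries a \emph{good cover} $\mathcal{U}$ by coordinate charts with contractible intersections, on each of which the local homotopies of Theorems \ref{UniversalDR} and \ref{USC}, as well as the two Poincar\'e lemmas, are available.

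For the de Rham side I would first take $\hat\delta$-cohomology of $_\mathpzc{T}\mathcal{V}^{\bullet\bullet\bullet}_\mani$. By Theorem \ref{USC} this homology is $\stsheaf$ concentrated in $(\Omega^\bullet_{\mani, odd})^\ast$-degree zero, so on each chart intersection it collapses the triple complex to the \v{C}ech-de Rham double complex $\check{C}^\bullet(\mathcal{U}, \Omega^\bullet_{\mani, odd})$ with differentials $\check\delta$ and $\hat d$. Running its two filtrations: taking $\check\delta$ first, fineness of $\Omega^\bullet_{\mani, odd}$ kills all higher \v{C}ech cohomology and leaves $\Omega^\bullet_{\mani, odd}(\mani)$ in \v{C}ech-degree zero, whose $\hat d$-cohomology is $H^\bullet_{\mathpzc{dR}}(\mani)$; taking $\hat d$ first, the Poincar\'e lemma for differential forms makes $\Omega^\bullet_{\mani, odd}$ a resolution of $\mathbb{R}_\mani$ on each contractible intersection, leaving $\check{C}^\bullet(\mathcal{U}, \mathbb{R}_\mani)$ and hence $\check{H}^\bullet(\mani, \mathbb{R}_\mani)$. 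Comparing the two limits of this double complex yields $H^\bullet_{\mathpzc{dR}}(\mani) \cong \check{H}^\bullet(\mani, \mathbb{R}_\mani)$.

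The integral side is entirely parallel. Taking $\hat d$-cohomology of $_\mathpzc{T}\mathcal{V}^{\bullet\bullet\bullet}_\mani$ first, Theorem \ref{UniversalDR} gives $\mathcal{B}er(\mani)$ concentrated in a single (top) form-degree, collapsing the triple complex to the \v{C}ech-Spencer double complex $\check{C}^\bullet(\mathcal{U}, \Sigma_\mani^\bullet)$ with differentials $\check\delta$ and $\hat\delta$. Again the $\check\delta$-first filtration collapses by fineness to $\Sigma_\mani^\bullet(\mani)$ and produces $H^\bullet_{\mathpzc{Sp}}(\mani)$, while the $\hat\delta$-first filtration uses the Poincar\'e lemma for integral forms (Theorem \ref{PLInt}) to replace $\Sigma_\mani^\bullet$ by $\mathbb{R}_\mani$ on each intersection, producing $\check{H}^\bullet(\mani, \mathbb{R}_\mani)$. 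Hence $H^\bullet_{\mathpzc{Sp}}(\mani) \cong \check{H}^\bullet(\mani, \mathbb{R}_\mani)$, and stringing the two comparisons together gives the asserted chain
\begin{equation*}
H^\bullet_{\mathpzc{dR}}(\mani) \cong \check{H}^\bullet(\mani, \mathbb{R}_\mani) \cong H^\bullet_{\mathpzc{Sp}}(\mani).
\end{equation*}

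The main obstacle is not the spectral-sequence bookkeeping but the repeated passage from \emph{sheaf}-level statements to statements about global sections and \v{C}ech cochains, made delicate by the fact that $\hat d$ and $\hat\delta$ are honest differential operators rather than $\stsheaf$-linear maps, so no module-theoretic shortcut is available. Every such passage rests on $\Gamma$-acyclicity, which is exactly why the hypothesis that $\mani$ be \emph{real} (partitions of unity) is indispensable and cannot be dropped for complex analytic or algebraic $\mani$. Two further points demand care: one must choose the good cover by charts so that the local homotopies of Theorems \ref{UniversalDR} and \ref{USC} genuinely apply on each finite intersection, and one must control convergence of the spectral sequences despite $\Omega^\bullet_{\mani, odd}$ and $(\Omega^\bullet_{\mani, odd})^\ast$ being unbounded — this last being already guaranteed by the single-degree concentration established in those theorems. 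Finally, the truly supergeometric ingredient, without which the integral side would fail, is the Poincar\'e lemma for integral forms.
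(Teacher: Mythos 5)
Your proposal is correct and follows essentially the same route as the paper's own proof: collapse the \v{C}ech-virtual superforms triple complex via the homology of the universal Spencer (resp.\ de Rham) complex to obtain the \v{C}ech-de Rham (resp.\ \v{C}ech-Spencer) double complex, then compare its two filtrations using partitions of unity (generalized Mayer-Vietoris) on one side and the appropriate Poincar\'e lemma on the other. Your additional remarks on fineness, good covers, and convergence despite unboundedness make explicit what the paper leaves implicit, but do not change the argument.
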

\begin{proof} Let us consider the \v{C}ech-de Rham complex $_\mathpzc{T}\mathcal{V}^{\bullet \bullet \bullet}_\mani$. Taking the homology with respect to $\hat \delta$, as done above, one reduces to the usual \v{C}ech-de Rham double complex 
$\check{C}^\bullet (\mathcal{U}, \Omega^\bullet_{\mani, odd}),$ where $\mathcal{U}$ is a good cover of $\mani$. We see that the related spectral sequences converge at page two. Indeed, on one hand 
$H_{\hat d} H_{\check{\delta}} (\check{C}^\bullet (\mathcal{U}, \Omega^\bullet_{\mani, odd})) \cong H_{\mathpzc{dR}}^\bullet (\mani)$, by the generalized Mayer-Vietoris sequence, having used a partition of unity of $\mani$. On the other hand 
$H_{\check \delta} H_{\hat{d}} (C^\bullet (\mathcal{U}, \Omega^\bullet_{\mani, odd})) \cong \check H^\bullet (\mani, \mathbb{R}_\mani),$ by the Poincar\'e lemma. The same argument holds true for 
integral forms with the obvious modifications.\end{proof}

{\remark[Supermanifolds with K\"ahler Reduced Manifold and Hodge-to-de Rham Degeneration] In the above Theorem \ref{Cechde} we have restricted ourselves to the case of \emph{real} supermanifolds. It is quite natural to ask what happens in the case of \emph{complex} supermanifolds. \\
Let us now once again work with differential forms $\Omega^{\bullet}_{\mani, odd}$ - which are now to be seen as {holomorphic} differential forms -, \emph{i.e.}\ computing the cohomology first with respect to $\hat{\delta}$ on the triple complex 
$_\mathpzc{T}\mathcal{V}^{\bullet \bullet \bullet}_\mani.$ Now, the cohomology of the total complex related to the double complex $\check{C}^{\bullet} (\mathcal{U}, \Omega^\bullet_{\mani, odd})$ is just $\check H^\bullet (\mani, \mathbb{C}_\mani)$. Indeed, 
by the holomorphic Poincar\'e Lemma, taking the cohomology with respect to $\hat d$ yields $\check{C}^{\bullet} (\mathcal{U}, \mathbb{C}_\mani)$, so that the spectral sequence converges at page two and the total cohomology is given by 
$\check{H}^\bullet (\mani, \mathbb{C}_\mani).$ \\
On the other hand, there is no holomorphic partition of unity, so that the exactness of the generalized Mayer-Vietoris sequence fails in the complex setting. In this regard, it is a fundamental result in ordinary complex geometry that, for a compact K\"ahler manifold, the 
\emph{Hodge-to-de Rham} (or Fr\"{o}licher) spectral sequence converges at page one, thus giving the decomposition of the de Rham cohomology with complex coefficients $\check{H}(X, \mathbb{C}_\mani)$ into vector spaces of the kind 
$\check{H}^q (X, \Omega^p_X)$, \emph{i.e.}
\bear
\check{H}^n (X, \mathbb{C}_X) \cong \bigoplus_{p+q = n} \check{H}^q (X, \Omega^p_X),
\eear
where $X$ is a generic complex manifold, see for example \cite{Voisin}. Remarkably, in complex supergeometry the Hodge-to-de Rham spectral sequence does {not} converge at page one. Quite the opposite with respect to the commutative case, 
there are many non-zero maps at page one of the spectral sequence. We shall see this by means of an example.}
\begin{example} \label{example}
Let $(\mathpzc{E}, \mathcal{O}_\mathpzc{E})$ be an \emph{elliptic curve} over the complex numbers. We consider a supermanifold ${\mathpzc{SE}} $ of dimension $1|1$ constructed over the elliptic curve $\mathpzc{E}$, whose structure sheaf is given by 
the direct sum of invertible sheaves $\mathcal{O}_{\mathpzc{SE}} \defeq \mathcal{O}_{\mathpzc{E}} \oplus \Pi \Theta_\mathpzc{E}$, where $\Theta_{\mathpzc{E}}$ is a \emph{theta characteristic} of $\mathpzc{E}$, \emph{i.e.} 
$\Theta^{\otimes 2}_{\mathpzc{E}} \cong \mathcal{K}_{\mathpzc{E}}$, for $\mathcal{K}_{\mathpzc{E}}$ the canonical sheaf of $\mathpzc{E}$. We recall that since $\mathpzc{E}$ is an elliptic curve we have 
$\mathcal{K}_\mathpzc{E} \cong \mathcal{O}_{\mathpzc{E}}.$ The supermanifold constructed this way is a genus $g=1$ \emph{super Riemann surface}, also said $\mathcal{N}=1$ \emph{SUSY curve} (of genus 1) \cite{Fior} \cite{WittenRiem}. Over an 
elliptic curve $\mathpzc{E}$ there are four different possible choices for a theta characteristic, three of them are such that $h^0 (\mathpzc{E}, \Theta_{\mathpzc{E}}) = 0$, \emph{i.e.}\ they are {even} theta characteristics, and the remaining one is 
such that $h^0 (\mathpzc{E}, \Theta_{\mathpzc{E}}) = 1$, \emph{i.e.}\ there is a unique {odd} theta characteristic (and it can be identified by $\Theta_{\mathpzc{E}} \cong \mathcal{O}_{\mathpzc{E}}$). Here we have denoted with $h^i$ the dimension 
of the related cohomology group.\\ Let in particular $\mathpzc{SE}$ be the genus $g=1$ super Riemann surface with the choice of the {odd} theta characteristic. We are interested in study the Hodge-to-de Rham spectral sequence related to 
$\check{C}(\mathcal{U}, \Omega^\bullet_{\mathpzc{SE}, odd})$. Computing the cohomology with respect to the \v{C}ech differential, one gets at page one that $E^{p,q}_1 = \check{H}^q ( \Omega^{p}_{\mathpzc{SE}, odd})$. So one is left to study 
the maps $\check{H}^i (\Omega^k_{\mathpzc{SE}, odd}) \rightarrow \check{H}^i (\Omega^{k+1}_{\mathpzc{SE}, odd})$ for $i= 0, 1$ and $k\geq 0$, induced by the de Rham differential on the $\mathbb{C}$-vector spaces 
$\check{H}^i (\Omega^k_{\mathpzc{SE}, odd})$. In order to do this, we note that $\mathpzc{SE}$ is \emph{split} and one has a decomposition of $\Omega^{k}_{\mathpzc{SE}, odd}$ as sheaf of $\mathcal{O}_{\mathpzc{E}}$-modules. It is not hard to see that 
\bear
\Omega^{k\geq 1}_{\mathpzc{SE}, odd} \cong \Theta_{\mathpzc{E}}^{\otimes {k}} \oplus \Theta_{\mathpzc{E}}^{\otimes {k+2}} \oplus \left (\Pi \Theta_{\mathpzc{E}}^{\otimes k+1} \right )^{\oplus 2} \cong \mathcal{O}_{\mathpzc{E}}^{\oplus 2} \oplus 
\Pi \mathcal{O}_{\mathpzc{E}}^{\oplus 2 },
\eear 
where we have used that $\Theta_\mathpzc{E} \cong \mathcal{O}_{\mathpzc{E}} $ and that $\Theta_{\mathpzc{E}} \cong ( \Omega^1_{\mathpzc{SE}, odd} / (\Pi \Theta_{\mathpzc{E}}) \Omega^1_{\mathpzc{SE}, odd})_0$, see \cite{Manin}. This 
decomposition allows one to easily compute the cohomology, which for both $ q= 0 $ and $q = 1$ reads:
\begin{align}
&\check{H}^q (\mathcal{O}_{\mathpzc{SE}}) \cong \mathbb{C}^{1|1}, \qquad \check{H}^q (\Omega^{k\geq 1}_{\mathpzc{SE}, odd}) \cong \mathbb{C}^{2|2}.
\end{align}
Looking at the Hodge-to-de Rham spectral sequence, one is led to study the cohomology of the following sequence of maps of $\mathbb{C}$-vector spaces induced by the de Rham differential:
\bear
\xymatrix{
\check{H}^q(\mathcal{O}_{\mathpzc{SE}})\cong \mathbb{C}^{1|1} \ar[r]^{d^{0}\quad } & \check{H}^q (\Omega^1_{\mathpzc{SE}, odd}) \cong \mathbb{C}^{2|2} \ar[r]^{d^1 \;} &  \check{H}^q (\Omega^2_{\mathpzc{SE}, odd}) \cong \mathbb{C}^{2|2} 
\ar[r]^{\qquad \quad d^2} & \ldots
}
\eear 
where $q=0,1$. Recalling that the maps $d$ in the sequence above are odd, we separately study the two cases.
\begin{enumerate}
\item[$q=0:$] this case corresponds to the map induced by the de Rham differential $d$ on the global sections, which is nothing by $d$ itself. In general, one finds that 
\bear
\check{H}^0 (\mathcal{O}_{\mathpzc{SE}}) \cong \mathbb{C} \cdot \big \{ 1\, | \,\theta \big \}, \qquad \check{H}^{0} (\Omega^{i}_{\mathpzc{SE}, odd}) \cong \mathbb{C} \cdot \big \{ d\theta^i , \, \theta dz d\theta^{i-1}\, |\, dz d\theta^{i-1},\, 
\theta d\theta^i \big \}
\eear
where $\theta$ is a global section of $\Theta_{\mathpzc{E}} \cong \mathcal{O}_{\mathpzc{E}}$ and $i\geq 1$. \\
Now, acting with the de Rham differential on the generators one finds that the first map $d^{0} : \check H^0 (\mathcal{O}_{\mathpzc{SE}}) \rightarrow \check H^0 (\Omega^1_{\mathpzc{SE}, odd})$ is easily identified to be such that 
$\ker d^{0} \cong \mathbb{C}\cdot 1$ and $\mbox{im}\, d^{0} \cong \mathbb{C} \cdot d\theta \subset \check H^0 (\Omega^1_{\mathpzc{SE}, odd})$. The higher maps are such that 
$\ker d^{i} \cong \mathbb{C}\cdot \{ d\theta^i\, | \, dzd\theta^{i-1} \} \subset \check H^0 (\Omega^i_{\mathpzc{SE}, odd})$ and 
$\mbox{im}\, d^{i} \cong \mathbb{C} \cdot \{ d\theta^{i+1}\, | dz d\theta^{i} \} \subset \check H^0 (\Omega^{i+1}_{\mathpzc{SE}, odd})$. In other words the maps are non-zero on the global sections having a $\theta$, for only in this case there is a 
non-vanishing derivative coming from $d$. Using these, one finds that the only non-trivial cohomology groups contributing to $E_2^{i\geq 0, 0}$ are given by
\begin{align}
E^{0,0}_2 \cong \mathbb{C} \cdot 1 \cong \mathbb{C}, &\qquad E_2^{1,0 } \cong \mathbb{C} \cdot dz \cong \Pi \mathbb{C}. 
\end{align}
Notice that these match the non-vanishing cohomology groups for an ordinary elliptic curve, the difference being that those are found at page one, while in the complex supergeometric setting there is an infinite number of non-zero differentials at page one.  
\item[$q=1:$] this case is similar to $q=0,$ upon using the Dolbeault identification $H_{\bar \partial}^{p, q} (\mathpzc{E}) \cong \check{H}^{q} (\Omega^p_{\mathpzc{E}})$, so that under this isomorphism one can systematically multiply the 
above global sections for $q=0$ by $d\bar z \in H^{0,1}_{\bar \partial} (\mathpzc{E})$ to obtain those for $q=1$. Doing this, one finds that the non-trivial cohomology groups contributing to $E_2^{i\geq 0, 1}$ are given by
\begin{align}
E^{0,1}_2 \cong \mathbb{C} \cdot d\bar z \cong \Pi \mathbb{C}, &\qquad E_2^{1,1 } \cong \mathbb{C} \cdot dz d\bar z  \cong  \mathbb{C}, 
\end{align}
and once again one finds an infinite number of non-zero differentials at page one.
\end{enumerate}
Since the differentials at page two read $d : E_2^{p,q} \rightarrow E_2^{p+2, q-1}$, then they are all zero by the above result in cohomology. It follows that  the Hodge-to-de Rham spectral sequence for $\mathpzc{SE}$ converges at page two, \emph{i.e.} 
$E_2 = E_{\infty}$, giving the usual Hodge decomposition of the de Rham cohomology groups.

\end{example}

The above example can be generalized to any super Riemann surface of genus $g\geq 2$: once again one finds that the Hodge-to-de Rham spectral sequence does not converge at page one, but it converge at page two instead. Nonetheless, 
studying the (infinite non-trivial) differentials at page one is not as easy as in the case $g=1$ above. In this case, the dimensions of the cohomology groups that appear at page one have been computed by one of the authors in \cite{CGN}. In any case, 
the above discussion suggests the following 
\begin{problem}
Given a complex supermanifold with K\"ahler associated reduced manifold, does its Hodge-to-de Rham spectral sequence \emph{always} converge at page two? 
\end{problem}
\noindent Finally, notice that in the previous example the case of differential forms can be related to the case of integral forms by the supergeometric analog of \emph{Serre duality}, which indeed involves the Berezinian sheaf in the role of the dualizing sheaf, 
see for example \cite{Penkov}. From this point of view, using differential forms is again equivalent to use integral forms.

{\remark[On Poincar\'e Lemmas in Supergeometry] The previous results rely heavily on the supergeometric generalization of the Poincar\'e Lemma, both for differential forms and integral forms. In the case of differential forms such a generalization 
is completely straightforward and the literature offers various proofs with different level of abstraction of the fact that $\Omega^\bullet_{\mathbb{R}^{p|q}, odd}$ is a right resolution of the constant sheaf $\mathbb{R}$, see for example 
\cite{BR} \cite{Deligne} \cite{Manin}. \\
The story is quite different in the case of integral forms: indeed - to the best knowledge of the authors - a Poincar\'e Lemma for integral forms is stated in \cite{Manin} (as Theorem 3 in chapter 4, paragraph 8) but no proof is provided. As it turns out, the proof of such a theorem is by no means obvious. We fill this gap in the literature by providing a detailed proof. }

\begin{theorem}[Poincar\'e Lemma for Integral Forms] \label{PLInt} Let $\mani$ be a real or complex supermanifold of dimension $p|q$ and let $(\Sigma_\mani^{p-\bullet}, \delta) $ be the complex of integral forms associated to $\mani$. One has
\bear
H^i_{\delta} (\Sigma^{p-\bullet}_\mani) \cong \left \{ \begin{array}{lll}
\mathbb{K}_\mani  & & i = 0 \\
0 & & i \neq 0.
\end{array}
\right.
\eear
In particular, $H^0_{\delta} (\Sigma^{p-\bullet}_\mani)$ is generated by the section $s_0 = \varphi \, \theta_1 \ldots \theta_q \otimes \pi \partial_{x_1} \ldots \pi \partial_{x_p}$, where $\varphi $ is a generating section of the Berezinian sheaf. 
\end{theorem}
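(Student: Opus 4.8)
The plan is to reduce to a local computation on a coordinate superdomain and then to split the complex $(\Sigma^{p-\bullet}_\mani, \delta)$ into an \emph{even} and an \emph{odd} factor, on which the Poincar\'e lemma has a completely different nature. Since the statement is local, I would fix a chart with coordinates $z_1, \ldots, z_p \,|\, \theta_1, \ldots, \theta_q$ and trivialize $\mathcal{B}er(\mani)$ by its generating section $\varphi(x)$; as the formula \eqref{deltaint} shows that $\delta$ fixes $\varphi(x)$ and only differentiates the function part while contracting the polyfield, this invertible rank-one twist does not affect the cohomology and can be absorbed. Writing the structure sheaf as $\stsheaf \cong \mathcal{A}_z \otimes_{\mathbb{K}} \Lambda[\theta]$, where $\mathcal{A}_z$ denotes the smooth (resp.\ holomorphic) functions in the even variables and $\Lambda[\theta]$ the exterior algebra in the odd ones, and the symmetric algebra as $S^\bullet \Pi\mathcal{T}_\mani \cong \Lambda[\pi\partial_z] \otimes_{\mathbb{K}} \mathbb{K}[\pi\partial_\theta]$ (exterior in the odd generators $\pi\partial_{z_i}$, polynomial in the even generators $\pi\partial_{\theta_\alpha}$), one obtains an isomorphism of $\mathbb{K}$-vector spaces
\[
\Sigma^{p-\bullet}_\mani \;\cong\; \big( \mathcal{A}_z \otimes_{\mathbb{K}} \Lambda[\pi\partial_z] \big) \otimes_{\mathbb{K}} \big( \Lambda[\theta] \otimes_{\mathbb{K}} \mathbb{K}[\pi\partial_\theta] \big).
\]
First I would check that, under this regrouping, the operator $\delta = \sum_a \delta_{x_a}$ splits as $\delta = \delta_z + \delta_\theta$, with $\delta_z \defeq \sum_i \delta_{z_i}$ acting only on the first factor (applying $\partial_{z_i}$ and contracting $\pi\partial_{z_i}$) and $\delta_\theta \defeq \sum_\alpha \delta_{\theta_\alpha}$ only on the second. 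From $\delta^2 = 0$ (Lemma \ref{propdelta}) together with the readily checked $\delta_z^2 = 0$ and $\delta_\theta^2 = 0$, one gets $\delta_z \delta_\theta + \delta_\theta \delta_z = 0$, so the above is precisely the total differential of a tensor product of two complexes of $\mathbb{K}$-vector spaces. Working over a field, the K\"unneth theorem then gives $H^\bullet_\delta \cong H^\bullet(\text{even}) \otimes_{\mathbb{K}} H^\bullet(\text{odd})$, reducing the problem to the two factors separately.

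For the even factor $\big(\mathcal{A}_z \otimes \Lambda[\pi\partial_z], \delta_z\big)$, I would identify it with the ordinary de Rham complex of the even coordinate domain via contraction against the top polyvector $\pi\partial_{z_1}\cdots\pi\partial_{z_p}$, which carries $\Lambda^k[\pi\partial_z]$ to $\Omega^{p-k}$ and turns $\delta_z$ into the usual exterior derivative $d$. The ordinary smooth (respectively holomorphic) Poincar\'e lemma then shows this factor is a resolution of $\mathbb{K}$, with cohomology concentrated at $\Omega^0$, i.e.\ at top $\pi\partial_z$-degree, generated by the class of the constant function together with $\pi\partial_{z_1}\cdots\pi\partial_{z_p}$.

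For the odd factor $\big(\Lambda[\theta] \otimes \mathbb{K}[\pi\partial_\theta], \delta_\theta\big)$ I would argue purely algebraically. Since the $q$ pairs $(\theta_\alpha, \pi\partial_{\theta_\alpha})$ decouple, this factor is itself a tensor product over $\alpha$ of the one-variable complexes $\Lambda[\theta_\alpha] \otimes \mathbb{K}[\pi\partial_{\theta_\alpha}]$, so by K\"unneth it suffices to treat $q=1$. A direct computation, or equivalently an Euler-type homotopy producing $\{\delta_\theta, K_\theta\} = (\text{degree operator})$ entirely parallel to the homotopies constructed in the proofs of Theorems \ref{UniversalDR} and \ref{USC}, shows that each one-variable complex is exact except in the single bidegree ($\theta$-degree $1$, $\pi\partial_\theta$-degree $0$), where its cohomology is the line spanned by $\theta_\alpha$. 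Taking the product over $\alpha$, the odd factor has one-dimensional cohomology generated by $\theta_1 \cdots \theta_q \otimes 1$.

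Combining the two factors by K\"unneth, the total cohomology is one-dimensional and generated by the class of $\varphi\,\theta_1 \cdots \theta_q \otimes \pi\partial_{z_1}\cdots\pi\partial_{z_p} = s_0$. Tracking the grading (total $\pi\partial$-symmetric degree $p$, hence $\Sigma$-degree $0$ in the shifted convention introduced after \eqref{deltaint}) places this class in cohomological degree $i=0$, giving $H^0_\delta \cong \mathbb{K}_\mani$ and $H^i_\delta = 0$ for $i \neq 0$; naturality of all the isomorphisms on contractible charts upgrades the stalkwise answer to the constant \emph{sheaf} $\mathbb{K}_\mani$. The main obstacle is precisely the odd factor, the genuine hallmark of supergeometry: here the \emph{finite} exterior algebra $\Lambda[\theta]$ of odd functions meets the \emph{infinite} polynomial algebra $\mathbb{K}[\pi\partial_\theta]$ of even polyfields, so the complex is unbounded and its exactness cannot arise from any integration but only from the algebraic Koszul/degree-counting homotopy. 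A secondary but genuine technical point is to carry the parity signs in \eqref{deltaint} carefully enough to confirm that $\delta$ really decouples as $\delta_z + \delta_\theta$ with $\delta_z$ and $\delta_\theta$ anticommuting.
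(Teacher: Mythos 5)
Your proof is correct, but it takes a genuinely different route from the paper's. The paper constructs a single explicit homotopy operator built from the rescaling $G_t(x)=tx$ and an integral $\int_0^1 dt\, t^{Q_s}$ whose exponent $Q_s$ depends on the multidegree of the integral form; computing $\delta h + h\delta$ yields the identity minus a projector, and the homotopy is seen to fail exactly in the multidegree $\deg_{\pi\partial_\theta}(F)=0$, $\deg_{\pi\partial_z}(F)=p$, $\deg_\theta(f)=q$, which produces the generator $s_0$ directly. Your tensor-product decomposition plus K\"unneth is more structural: it cleanly isolates the analytic input (the ordinary smooth or holomorphic Poincar\'e lemma for the even factor, after the classical identification of the trivialized divergence complex with the de Rham complex) from the purely algebraic odd factor, where the finite exterior algebra in the $\theta$'s pairs against the infinite polynomial algebra in the $\pi\partial_\theta$'s. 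This modularity makes transparent both where the single cohomology class comes from (the product of the two one-dimensional cohomologies, with the degree count landing it in $\Sigma^0$) and --- consistently with the paper's closing remark on algebraic supermanifolds --- exactly which half of the argument requires integration and which half survives in the algebraic category. What your route costs is the Koszul-sign bookkeeping needed to verify that $\delta$ really is the total differential of a tensor product of complexes, i.e.\ that $\delta_z$ and $\delta_\theta$ act on separate factors and anticommute with the correct signs inherited from \eqref{deltaint}; you flag this but do not carry it out, and it is the one place where the argument must be checked carefully. The paper's single homotopy, by contrast, treats all variables uniformly at the price of a somewhat ad hoc choice of $Q_s$ and a more delicate integration-by-parts computation.
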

\begin{proof} We need to construct a homotopy for the complex. In particular, working locally, we show that for any $k \neq 0$ there exists an homotopy 
$h^k : \mathcal{B}er (\mani) \otimes S^{p - k} \Pi \mathcal{T}_\mani  \rightarrow \mathcal{B}er (\mani) \otimes S^{p-1-k}$ for the differential $\delta$, that is a map such that
$h^{k+1} \circ \delta^{k} + \delta^{k-1} \circ h^k = id_{\mathcal{B}er (\mani) \otimes S^{p-k} \Pi \mathcal{T}_\mani}.$\\
Given a set of local coordinates for $\mani$, we call it $x_a \defeq z_1, \ldots, z_p | \theta_1, \ldots, \theta_q$, and $t \in [0,1]$, we consider the map $(t , x_a ) \stackrel{G}{\longmapsto} t x_a.$ 
This induces a map on sections of the structure sheaf via pull-back, $f(x_a) \stackrel{G^\ast}{\longmapsto} f(tx_a)$. 
We write $G$ as a family of maps parametrized by $t \in [0,1]$, that is $G_t : \mani \rightarrow \mani$, so that we can rewrite the above as a family of pull-back maps $G^\ast_t : \mathcal{O}_\mani \rightarrow \mathcal{O}_\mani$. We define the 
homotopy operator as 
\bear
h^k (\varphi f \otimes F) \defeq (-1)^{|f| + |F|} \varphi \, \sum_b (-1)^{|f| (|x_b| + 1)} \left ( \int_0^1 dt \, t^{Q_s} x_b G^\ast_t f \right ) \otimes \pi \partial_{b} F, 
\eear
where $\varphi$ is a section of the Berezinian, $f $ is a section of the structure sheaf and $F$ is a polyfield of the form $F = \pi \partial^I$ for some multi-index such that $|I| = p-k$ and $Q_s$ is a constant, dependent on the integral form 
$s = \varphi f \otimes F$ and to be determined later on.\\
We now start computing $H \delta (\varphi f \otimes F)$. We have
\begin{align}
H \delta (\varphi f \otimes F) 
& = \varphi \sum_{a,b} (-1)^{(|x_a| + |x_b|)(|f| + |x_a|)} \left ( \int_0^1 dt \, t^{Q_{\delta s}} x_b G^\ast_t ( \partial_a f ) \right ) \otimes \pi \partial_b \cdot  \partial_{\pi \partial_a} F.
\end{align}
Now let us consider $\delta H (\varphi f \otimes F)$. We have
\begin{align}
\delta H (\varphi f \otimes F) 
& = + \varphi \sum_a \int^1_0 dt \, t^{Q_s} G^\ast_t f \otimes F  \label{one} \\
& \quad + \varphi \sum_a (-1)^{|x_a|} \int_0^1 dt \, t^{Q_s} x_b \partial_{ a} G^\ast_t f \otimes F  \label{two} \\
& \quad + \varphi \sum_a (-1)^{|x_a| + 1} \int_0^1 dt \, t^{Q_s} G^\ast_t f \otimes \pi \partial_a \cdot \partial_{\pi \partial_a} F \label{three} \\ 
& \quad - \varphi \sum_{a,b} (-1)^{(|f| + |x_a|) (|x_a| + |x_b|)} \int^1_0 dt \, t^{Q_s} x_b {\partial}_a (G^\ast_t f) \otimes \pi \partial_b \cdot \partial_{\pi \partial_a } F. \label{four}
\end{align}
We see that for the last line \eqref{four} to cancel the term $H\delta (\varphi f \otimes F)$ we need $Q_{\delta s} = Q_{s} + 1,$ by chain-rule.
Let us now study separately the first three lines in the previous expression. Clearly, the first line \eqref{one} yields
\bear
\varphi \sum_a \int^1_0 dt \, t^{Q_s} G^\ast_t f \otimes F = (p+q)  \varphi \left (\int^1_0 dt \, t^{Q_s} G^\ast_t f \right ) \otimes F.  
\eear
Let us now look at the second line \eqref{two}. Without loss of generality we can assume that $f$ is homogeneous of degree $\deg_{\theta} (f)$ in the theta's, so that we can rewrite
\begin{align}
\sum_a (-1)^{|x_a|}x_a \partial_a (G^\ast_t f ) & = \sum_{i = 1}^p z_i \partial_{z_i} f (t z| t \theta) - \sum_{\alpha =1}^q \theta_\alpha \partial_{\theta_\alpha} f (t  z | t \theta) \nonumber \\
&=  t \frac{d}{dt} f(t x) - 2 \deg_{\theta} (f)  f(tx).
\end{align}
It follows that the equation \eqref{two} can be computed as
\begin{align}
\varphi \sum_a (-1)^{|x_a|} \int_0^1 dt \, t^{Q_s} x_b \partial_{ a} G^\ast_t f \otimes F  & = \varphi \int_{0}^1dt \, t^{Q_s} \left ( t \frac{d}{dt} f(t x) - 2 \deg_{\theta} (f)   f(tx) \right ) \otimes F \nonumber \\
& = \varphi f \otimes F - \delta_{Q_s + 1 + \deg_{\theta} (f), 0 } \, ( \varphi f (0) \otimes F )+ \nonumber \\ 
& \quad - \left ( Q_s + 1 + 2 \deg_{\theta} (f) \right ) \varphi \left ( \int_0^1 dt\, t^{Q_s} G^\ast_t f \right ) \otimes F,
\end{align}
by integration by parts.
Finally, denoting $\deg_{\pi \partial_\theta} (F)$ and $\deg_{\pi \partial_z} (F)$ the degree of $F$ in the even ($\pi \partial_{\theta}$) and odd ($\pi \partial_z$) monomials of the polyfield $F$ respectively, it can be observed that
\bear
\sum_a (-1)^{|x_a| + 1}\pi \partial_a \partial_{\pi \partial_a } F = \left (\deg_{\pi \partial_\theta} (F) - \deg_{\pi \partial_z} (F) \right )  F,
\eear
so that the third line \eqref{three} can be rewritten as
\begin{align}
\varphi \sum_a (-1)^{|x_a| + 1} \int_0^1 dt \, t^{Q_s} G^\ast_t f \otimes \pi \partial_a \cdot \partial_{\pi \partial_a} 
= \left (\deg_{\pi \partial_\theta} (F) - \deg_{\pi \partial_z} (F) \right ) \varphi \left ( \int_0^1 dt\, t^{Q_s } G^{\ast}_t f \right ) \otimes F.  
\end{align}
Gathering together all the contributions one has 
\begin{align}
(\delta H + H \delta )(\varphi f \otimes F) &  = \varphi f \otimes F - \delta_{Q_s +1 + \deg_{\theta} (f), 0} \,\varphi \, f(0) \otimes F + \nonumber \\
& + \left (p+q + \deg_{\pi \partial_\theta} (F) - \deg_{\pi \partial_z } (F) - 2\deg_{\theta} (f) - Q_s - 1 \right ) \varphi \int_0^1 dt\, t^Q G^{\ast}_t f \otimes F.
\end{align}
Let us now look at the condition on $Q_s$ in order to have an homotopy. We have to require that 
\bear
Q_s = p+q + \deg_{\pi \partial_\theta} (F) - \deg_{\pi \partial_z } (F) - 2 \deg_{\theta} (f) -1.
\eear
This in turn leads to
\bear
(\delta H + H \delta )(\varphi f \otimes F) &  = \varphi f \otimes F - \delta_{(p+q + \deg_{\pi \partial_\theta} (F) - \deg_{\pi \partial_z } (F) - \deg_{\theta} (f)) , 0} \,\varphi \, f(0|\theta) \otimes F.
\eear
We now note that $\deg_{\pi \partial_\theta} (F) \geq 0$, $0\leq \deg_{\pi \partial_z } (F) \leq p$ and $0\leq \deg_{\theta} (f) \leq q$, therefore the only instance in which the above fails to be a homotopy corresponds to the choices 
\bear
\left \{ \begin{array}{l}
\deg_{\pi \partial_\theta} (F) = 0 \\
\deg_{\pi \partial_z } (F) = p \\
\deg_{\theta} (f) = q,
\end{array}
\right.
\eear
which in turn lead to the following generator for the only non-trivial cohomology group 
\bear
H^0_\delta (\Sigma^{p - \bullet}) = k \cdot \left ( \varphi\, \theta_1 \ldots \theta_q \otimes \pi \partial_{z_1} \ldots \pi \partial_{z_p} \right ),
\eear
for $k\in \mathbb{K}$. Note that by the very definition of $\delta$ this element is indeed closed and not exact, since it has the maximal amount of both theta's and odd sections $\pi \partial_{z}$'s, thus concluding the proof.
\end{proof}
{\remark[On Algebraic Versus Real or Complex Supermanifolds] With reference to the spectral sequences $E^\Omega_r$ and $E^\Sigma_r$ related to the double complex $_\mathpzc{D}\mathcal{V}^{\bullet \bullet}_\mani$, it is worth to remark that both 
the homotopy operators of the universal de Rham and Spencer complex are algebraic, as no integral is involved. It follows that {at page one} the results for $E^\Omega_r$ and $E^\Sigma_r$ hold true also for algebraic supermanifolds and, more 
in general, for superschemes, so that one can indeed recover differential and integral forms from the virtual forms double complex also when working in the algebraic category. \\
This is no longer true already {at page two}: indeed both the homotopy operators appearing in the Poincar\'e Lemmas for differential and integral forms require an integration - for the case of integral forms, see above in the proof of Theorem \ref{PLInt}. 
It follows that the related results holds true only in the smooth and analytic category, but break down in the algebraic category. Notice by the way that such a difficulty also exists in the ordinary commutative setting. }

\appendix

\section{Lie Derivative on $(\Omega^{\bullet}_{\mani, odd})^\ast$}

\noindent We prove the properties of the Lie derivative on $(\Omega^{\bullet}_{\mani, odd})^{\ast}$ defined in \ref{LieStar}, as stated in Lemma \ref{PropLie} which we repeat here for the sake of readability. 
\begin{lemma}  The Lie derivative $\mathfrak{L}_X : (\Omega^{\bullet}_{\mani, odd})^\ast \rightarrow (\Omega^{\bullet}_{\mani, odd})^\ast$ has the following properties: 
\begin{enumerate}
\item $\mathfrak{L}_X (\tau) = \pi [X , \pi \tau]$ for any $\tau \in \Pi \mathcal{T}_\mani$;
\item $\mathfrak{L}_X$ is a superderivation of $(\Omega^\bullet_{\mani, odd})^{\ast}$, \emph{i.e.}\ the super Leibniz rule holds true:
\bear \label{superd1}
\mathfrak{L}_X  (\tau_1 \tau_2) = \mathfrak{L}_X (\tau_1) \tau_2 + (-1)^{|X||\tau_1|} \tau_1 \mathfrak{L}_{X} (\tau_2)
\eear
for any $\tau_1, \tau_2 \in (\Omega^{\bullet}_{\mani, odd})^\ast$ and $X \in \mathcal{T}_\mani;$
\item $\mathfrak{L}_{fX} (\tau) = f \mathfrak{L}_{X} (\tau) + (-1)^{|X||f|} \pi X \langle df, \tau \rangle$ for any $f \in \mathcal{O}_\mani$, $\tau \in (\Omega^{\bullet}_{\mani, odd})^\ast$.
\end{enumerate}
\end{lemma}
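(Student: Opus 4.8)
The plan is to prove all three properties by exploiting the recursive Definition \ref{LieStar} together with the \emph{non-degeneracy} of the contraction pairing $\langle\,,\,\rangle$: an element $\sigma\in(\Omega^k_{\mani,odd})^\ast$ is completely determined by the lower-degree elements $\langle df,\sigma\rangle$ as $f$ ranges over $\mathcal{O}_\mani$, since the exact $1$-forms $df$ generate $\Omega^1_{\mani,odd}$ over $\stsheaf$. Thus every identity to be established can be checked after pairing both sides with an arbitrary $df$, which also lowers the symmetric degree and opens the door to an induction.

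First I would prove (1), which is really the degree-one instance of the recursion and the conceptual heart of the matter. For $\tau=\pi Y\in\Pi\mathcal{T}_\mani$ and $f\in\stsheaf$, I apply Definition \ref{LieStar} with $\omega=df$, using that on functions $\mathfrak{L}_X=X=\mathcal{L}_X$, that $\mathcal{L}_X$ commutes with $d$ on the de Rham complex so that $\mathcal{L}_X(df)=d(Xf)$, and the explicit pairing $\langle df,\pi Y\rangle=(-1)^{(|Y|+1)(|f|+1)}Y(f)$. Solving the resulting relation for $\langle df,\mathfrak{L}_X(\tau)\rangle$ expresses it through $X(Y(f))$ and $Y(X(f))$; comparing with $\langle df,\pi[X,\pi\tau]\rangle$ for every $f$ and invoking non-degeneracy then identifies $\mathfrak{L}_X(\tau)$ with $\pi[X,\pi\tau]$.

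With (1) in hand, I would prove (2) and (3) together by induction on the symmetric degree $k$ of the argument. The base cases $k\le 1$ follow from the derivation property of $X$ on $\stsheaf$ and from (1); note that the inhomogeneous term in (3) vanishes on functions, since contracting a degree-$0$ element by a $1$-form gives zero. For the inductive step I pair both sides of each claimed identity with an arbitrary $df$. On the left I use the recursion of Definition \ref{LieStar}; on the right I use that $df$ acts as a superderivation of $(\Omega^\bullet_{\mani,odd})^\ast$ to distribute the contraction across the product $\tau_1\tau_2$ (for (2)), landing on terms of strictly smaller degree to which the inductive hypothesis applies. For (3) the crucial input is Cartan's formula on the de Rham side, $\mathcal{L}_{fX}\omega=f\,\mathcal{L}_X\omega+df\wedge\iota_X\omega$: the extra summand $df\wedge\iota_X$, once paired against $\tau$, is exactly what produces the inhomogeneous term $(-1)^{|X||f|}\pi X\langle df,\tau\rangle$, while the first summand reproduces $f\,\mathfrak{L}_X(\tau)$. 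Non-degeneracy again upgrades the paired identities to the desired ones.

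The main obstacle throughout is sign bookkeeping rather than conceptual content, and it is concentrated in step (1): one must reconcile the abstractly (recursively) defined operator $\mathfrak{L}_X$ with the concrete supercommutator $\pi[X,\pi\tau]$ under the paper's parity conventions — the $d\theta$'s even and $dz$'s odd, together with the sign $(-1)^{(|x_a|+1)(|x_b|+1)}$ built into the pairing, and the parity shift induced by $\pi$. Keeping careful track of these shifts is where all the delicacy lies; once the degree-one identity is pinned down with the correct signs, the inductive steps for (2) and (3) reduce to routine sign verifications that close on themselves.
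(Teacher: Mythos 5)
Your proposal is correct and follows essentially the same route as the paper's proof: you establish (1) by pairing with exact $1$-forms and solving the recursive relation of Definition \ref{LieStar} (the paper does the same with the coordinate forms $dx_a$), then obtain (2) by induction on the symmetric degree using that contraction by a $1$-form is a superderivation of $(\Omega^{\bullet}_{\mani, odd})^\ast$, exactly as in the appendix. The only cosmetic difference is in (3), where the paper gets the degree-one case directly from the bracket formula $\pi[fX,\pi\tau]$ of point (1) and then extends it by noting that $\mathfrak{L}_{fX}$, $f\mathfrak{L}_X$ and $\pi X\langle df,\,\cdot\,\rangle$ are all derivations, whereas you re-run the pairing induction via Cartan's formula; both arguments close.
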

\begin{proof} We prove the claims separately.
\begin{enumerate}
\item We write $\tau = \sum_a g_a \pi \partial_a$ and $X = \sum_b f_b \partial_b$ and we set $\mathfrak{L}_X (\tau) \defeq \sum_c h_c \pi \partial_c$. Now, noticing that 
$
\langle \tau , dx_a \rangle =g_a (-1)^{(|x_a| + 1)(|x_b| + 1)} \langle dx_a, \pi \partial_b \rangle = g_a, 
$ we compute
\begin{align}
\mathfrak{L}_X (g_a) & = \mathfrak{L}_X (\langle \tau , dx_a \rangle ) = \langle \mathfrak{L}_X (\tau), dx_a \rangle + (-1)^{|\tau||X|} \langle \tau, \mathcal{L}_X (dx_a) \rangle  \nonumber \\
& = h_a + (-1)^{|X| (|\tau| + 1)} \sum_b g_b (\partial_b f_a).
\end{align}
It follows that $h_a = \sum_b f_b (\partial_b g_a) - (-1)^{|X| |\pi \tau|}\sum_b g_b (\partial_b f_a),$
hence $\mathfrak{L}_X (\tau)= \pi ([X , \pi \tau]).$
\item We now prove that $\mathfrak{L}_X $ is a superderivation, showing that the \eqref{superd1} holds true by double induction on the degrees $( \mbox{deg} (\tau_1), \mbox{deg} (\tau_2))$ in $Sym^\bullet \Pi \mathcal{T}_\mani$. 
Clearly, the cases $(0,1)$ and $(1, 0)$ are guaranteed by the previous point in the proof. Next, since for a 1-form $\omega \in \Omega^1_{\mani, odd} $ one has 
\bear
\langle \tau_1 \tau_2 , \omega \rangle = \tau_1 \langle \tau_2, \omega \rangle + (-1)^{|\omega| |\tau_2|} \langle \tau_1, \omega \rangle \tau_2, 
\eear
then 
\bear
\mathfrak{L}_X (\langle \tau_1 \tau_2 , \omega \rangle ) = \mathfrak{L}_X (\tau_1 \langle \tau_2 , \omega \rangle ) + (-1)^{|\omega| |\tau_2|} \mathfrak{L}_X (\langle \tau_1 , \omega \rangle \tau_2), 
\eear
which, by inductive hypothesis is equal to 
\begin{align}
\mathfrak{L}_X (\langle \tau_1 \tau_2 , \omega \rangle ) 
& = \langle \mathfrak{L}_X (\tau_1) \tau_2 , \omega \rangle + (-1)^{|X|(|\tau_1| + |\tau_2|)} \langle \tau_1 \tau_2 , \mathcal{L}_X (\omega) \rangle + (-1)^{|X| |\tau_1|} \langle \tau_1 \mathfrak{L}_{X} (\tau_2) , \omega \rangle.   \label{I}
\end{align}
On the other hand one has
\bear \label{II}
\mathfrak{L}_{X} (\langle \tau_1 \tau_2 , \omega \rangle ) = \langle \mathfrak{L}_X (\tau_1 \tau_2 ), \omega \rangle + (-1)^{|X| (|\tau_1| + |\tau_2|)} \langle \tau_1 \tau_2 , \mathcal{L}_X (\omega) \rangle, 
\eear
so that comparing \eqref{I} with \eqref{II} one get the super Leibniz rule.
\item Let us first prove the case $\tau \in \Pi \mathcal{T}_\mani$, using the first point of the lemma. One has
\begin{align} 
\mathfrak{L}_{f X} (\tau) & = \pi [f X, \pi \tau] 
 = f \mathfrak{L}_X (\tau) + (-1)^{|X||f| } \pi X \langle df , \tau \rangle .
\end{align}
In order to conclude the proof, one can observe that $\mathfrak{L}_{fX}$ and $f\mathfrak{L}_X$ are both left derivations of $Sym^\bullet \Pi \mathcal{T}_\mani$ for $X$ and $f$ fixed. The same holds true for $\pi X \langle df , \, \cdot \, \rangle $, indeed
\begin{align}
\pi X \langle df , \tau_1 \tau_2 \rangle 
& =   \pi X \langle df , \tau_1 \rangle \tau_2 + (-1)^{|\tau_1| (|\pi X| + |df|)} \tau_1 \pi X \langle df, \tau_2 \rangle.
\end{align}
Then, thanks to the super Leibniz rule, the property proved above for $\tau \in \Pi \mathcal{T}_\mani$ holds true for any $\tau \in Sym^\bullet \Pi \mathcal{T}_\mani.$
\end{enumerate}
\end{proof}

\end{document}